\newtheorem{theorem}{Theorem}[section]
\newtheorem{lemma}[theorem]{Lemma}
\newtheorem{proposition}[theorem]{Proposition}
\newtheorem{corollary}[theorem]{Corollary}
\theoremstyle{definition} 
\newtheorem{definition}[theorem]{Definition}
\newtheorem{remark}[theorem]{Remark}
\DeclareMathOperator{\Cone}{Cone}
\DeclareMathOperator{\Hom}{Hom}
\DeclareMathOperator{\id}{id}
\newcommand{\A}{\mathcal{A}}
\newcommand{\Cc}{\mathcal{C}}
\newcommand{\Dc}{\mathcal{D}}
\newcommand{\Ec}{\mathcal{E}}
\newcommand{\F}{\mathbb{F}}
\newcommand{\gl}{\mathfrak{gl}}
\newcommand{\Modl}[1]{#1\textrm{-Mod}}
\newcommand{\Sc}{\mathcal{S}}
\newcommand{\Uc}{\mathcal{U}}
\newcommand{\Z}{\mathbb{Z}}
\newcommand{\Zc}{\mathcal{Z}}
\newcommand{\ootimes}{ 
  \mathbin{
    \mathchoice
      {\buildcircleotimes{\displaystyle}}
      {\buildcircleotimes{\textstyle}}
      {\buildcircleotimes{\scriptstyle}}
      {\buildcircleotimes{\scriptscriptstyle}}
  } 
}
\newcommand\buildcircleotimes[1]{%
  \begin{tikzpicture}[baseline=(X.base), inner sep=0, outer sep=0]
    \node[draw,circle] (X)  {$#1\otimes$};
  \end{tikzpicture}%
}
\title[Decategorified higher actions]{On the decategorification of some higher actions in Heegaard Floer homology}
\author[Andrew Manion]{Andrew Manion}
\address{Department of Mathematics, North Carolina State University, 2108 SAS Hall, Raleigh, NC 27695}
\email{ajmanion@ncsu.edu}
\begin{document}

\begin{abstract}
    We decategorify the higher actions on bordered Heegaard Floer strands algebras from recent work of Rouquier and the author and identify the decategorifications with certain actions on exterior powers of homology groups of surfaces. We also suggest an interpretation for these actions in the language of open-closed TQFT, and we prove a corresponding gluing formula.
\end{abstract}

\maketitle

\section{Introduction}

In \cite{ManionRouquier}, Rapha{\"e}l Rouquier and the author define a tensor product operation for higher representations of the dg monoidal category from \cite{KhovOneHalf}, which we call $\Uc$, and use it to reformulate aspects of cornered Heegaard Floer homology \cite{DM,DLM}. Part of this work involves defining 2-actions of $\Uc$ on the dg algebras $\A(\Zc)$ that bordered Heegaard Floer homology assigns to combinatorial representations $\Zc$ of surfaces.

Ignoring gradings and thus working with decategorifications over $\F_2$, one can view $\Uc$ as a categorification of the algebra $\F_2[E]/(E^2)$ (an $\F_2$ analogue of $U(\gl(1|1)^+)$), while if $\Zc$ is a representation of a surface $F$, then $\A(\Zc)$ categorifies the vector space $\wedge^* H_1(F,S_+; \F_2)$ where $S_+$ is a distinguished subset of the boundary of $F$. Thus, the 2-actions from \cite{ManionRouquier} should categorify actions of $\F_2[E]/(E^2)$ on $\wedge^* H_1(F,S_+; \F_2)$; the goal of this paper is to identify these actions explicitly using certain topological operations and to give an interpretation of these actions in the setting of open-closed TQFT.

To make things more precise, we recall that following Zarev \cite{Zarev} (but generalizing his definition slightly), a sutured surface is $(F,S_+,S_-,\Lambda)$ where $F$ is a compact oriented surface and $\Lambda$ is a finite set of points in $\partial F$ dividing $\partial F$ into alternating subsets $S_+$ and $S_-$. We impose no topological restrictions, but note that the sutured surfaces representable by arc diagrams $\Zc$ are those such that in each connected component of $F$ (not of $\partial F$), both $S_+$ and $S_-$ are nonempty (unlike Zarev \cite{Zarev}, we allow arc diagrams to have circle components as well as interval components, and we do not impose non-degeneracy). In particular, no closed surface can be represented by an arc diagram.

For an arc diagram $\Zc$ representing a sutured surface $(F,S_+,S_-,\Lambda)$, and each interval component $I$ of $S_+$, the constructions of \cite{ManionRouquier} define a 2-action of $\Uc$ on $\A(\Zc)$. On the other hand, there is a map $\phi_I$ from $H_1(F,S_+;\F_2)$ to $\F_2$ taking an element of $H_1(F,S_+;\F_2)$ to its boundary in $H_0(S_+;\F_2)$ and then pairing with the cohomology class of $I$. By summing $\phi_I$ over tensor factors, for $k \geq 1$ we get a map from $T^k H_1(F,S_+;\F_2)$ to $T^{k-1} H_1(F,S_+;\F_2)$ which induces a map $\Phi_I$ from $\wedge^k H_1(F,S_+;\F_2)$ to $\wedge^{k-1} H_1(F,S_+;\F_2)$.

\begin{theorem}\label{thm:IntroDecategorifiedUActions}
The 2-action of $\Uc$ on $\A(\Zc)$ corresponding to $I$ categorifies the action of $\F_2[E]/(E^2)$ on $\wedge^* H_1(F,S_+;\F_2)$ in which $E$ acts by $\Phi_I$.
\end{theorem}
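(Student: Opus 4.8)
The plan is to pass to Grothendieck groups and compare the resulting linear maps. Recall that the relevant ($\F_2$-linear, ungraded) Grothendieck group $K_0(\A(\Zc))$ is free with basis the classes $[P_S]$ of the indecomposable projective modules $P_S = \A(\Zc)e_S$, indexed by the primitive idempotents $e_S$, and that the identification referred to in the introduction sends $[P_S]$ to the square-free wedge monomial $a_S := \bigwedge_{i\in S}[a_i]$ in the classes $[a_i]\in H_1(F,S_+;\F_2)$ of the core arcs of the $1$-handles of $F$. On the other side, $K_0(\Uc)\otimes\F_2 = \F_2[E]/(E^2)$, and a $2$-action of $\Uc$ on $\A(\Zc)$ makes $K_0(\A(\Zc))$ into an $\F_2[E]/(E^2)$-module: the generating $1$-morphism $E$ (for the chosen interval component $I$) acts via a dg bimodule $\mathsf{E}_I$, the derived tensor product $\mathsf{E}_I\otimes_{\A(\Zc)}-$ descends to an endomorphism $[\mathsf{E}_I]$ of $K_0$, and since $E^2=0$ in $K_0(\Uc)$ one has $[\mathsf{E}_I]^2=0$. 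Thus Theorem~\ref{thm:IntroDecategorifiedUActions} amounts to the single identity $\theta\circ[\mathsf{E}_I]=\Phi_I\circ\theta$, where $\theta$ is the identification above; note that the $\A_\infty$ higher products of the action and the $2$-morphism structure of $\Uc$ play no role here.

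To establish this identity I would compute $[\mathsf{E}_I][P_S]$ for each such $S$. Unwinding the construction of the $2$-action in \cite{ManionRouquier}, $\mathsf{E}_I$ is assembled from strands diagrams supported near $I$ and implements ``removing a strand at $I$''; the claim to prove is that the complex $\mathsf{E}_I\otimes_{\A(\Zc)}P_S$ is quasi-isomorphic to $\bigoplus_{i}P_{S\setminus\{i\}}$, the sum over those $i\in S$ for which the core arc $a_i$ meets $I$ in an odd number of endpoints, i.e.\ for which $\phi_I([a_i])=1$. Granting this, $[\mathsf{E}_I]$ carries $a_S$ to $\sum_{i\in S}\phi_I([a_i])\,a_{S\setminus\{i\}}$; and by construction $\Phi_I(a_S)=\sum_{i\in S}\pm\,\phi_I([a_i])\,a_{S\setminus\{i\}}$, which over $\F_2$ is the same map. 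Hence $\theta\circ[\mathsf{E}_I]=\Phi_I\circ\theta$ and the theorem follows. As a sanity check, $\Phi_I^2=0$ already over $\F_2$, each pair of deletions being counted twice, consistent with $[\mathsf{E}_I]^2=0$.

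Since the $2$-action of \cite{ManionRouquier} is built from elementary arc diagrams and is compatible with their gluing, a cleaner way to organize the computation is to check the displayed quasi-isomorphism directly on the elementary pieces (one $1$-handle, together with the split diagrams), to verify that under gluing both $[\mathsf{E}_I]$ and $\Phi_I$ are compatible coderivation-type operators relative to the Mayer--Vietoris decomposition of $H_1(F,S_+;\F_2)$, and then to induct on the number of handles. Either way, the heart of the matter --- and the step I expect to be the main obstacle --- is the homological computation of $\mathsf{E}_I\otimes_{\A(\Zc)}P_S$: one must show that the differential on this complex kills every contribution except the ``delete one $I$-adjacent arc'' terms. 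A secondary point is to confirm, in the present generality where circle components and degenerate arc diagrams are allowed, that $K_0(\A(\Zc))$ really is free on the $[P_S]$ and that $\theta$ is an isomorphism onto $\wedge^* H_1(F,S_+;\F_2)$; this should be a routine extension of the identification recalled in the introduction.
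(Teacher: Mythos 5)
Your outline follows the same route as the paper: pass to $K_0^{\F_2}$, compute $[\Ec \otimes -]$ on the projective basis elements, and match against $\Phi_I$ via the identification of $K_0^{\F_2}(\A(\Zc))$ with $\wedge^* H_1(F,S_+;\F_2)$. However, you defer exactly the step where the content lies. The paper handles it by first showing (Proposition~\ref{prop:DA}) that $\Ec$, viewed as a left module, is an object of $\overline{\A(\Zc)}$: every basis element of $\Ec$ factors uniquely as $ay$ where $a \in \A(\Zc)$ and $y$ lies in a distinguished set $S$ of diagrams in which only the new strand at $(1/2,P)$ moves; hence $\Ec \cong \bigoplus_{y\in S}\Hom_{\A(\Zc)}(\lambda(y),-)$ as a module ignoring the differential, and the differential strictly decreases the combinatorial length of the moving strand, exhibiting $\Ec$ as an iterated cone of representable functors. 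It is this filtration that makes $[\Ec\otimes-]$ descend to $K_0^{\F_2}$ at all, and it yields the explicit $K_0$ identity $[\Ec\otimes\Hom(e,-)]=\sum_{y\in S,\,\rho(y)=e}[\Hom(\lambda(y),-)]$.

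Separately, your claim that $\Ec\otimes_{\A(\Zc)}P_S$ is quasi-isomorphic to $\bigoplus_i P_{S\setminus\{i\}}$, summed over $i$ with $\phi_I([a_i])=1$, is stronger than what is true or needed. When a matched pair $\{s,s'\}$ has both endpoints on $I$, the filtration produces two summands $\Hom(e',-)$, one for $y_s$ and one for $y_{s'}$, with $\lambda(y_s)=\lambda(y_{s'})=e'$, and these do not cancel at the chain level: the differential of $y_s$ resolves crossings of the moving strand against horizontal strands of $y_s$, but $s'$ has already been erased from $y_s$, so no term of the form $a\,y_{s'}$ appears. The cancellation happens only after passing to $K_0^{\F_2}$, because $2=0$ there. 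So the correct statement is the displayed $K_0$ identity together with the observation that arcs meeting $I$ at both endpoints are counted twice and hence vanish mod $2$ --- not a quasi-isomorphism. Your final comparison with $\Phi_I$ is then correct, and your secondary worry about freeness of $K_0^{\F_2}(\A(\Zc))$ on the horizontal-strands idempotents is indeed addressed by the routine extension of Petkova's argument that the paper records in Section~\ref{sec:Decategorification}.
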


See Theorem~\ref{thm:MainDecatResult} below for a more detailed statement of Theorem~\ref{thm:IntroDecategorifiedUActions}.

\subsection*{A TQFT interpretation}

It is natural to ask whether the actions of $\F_2[E]/(E^2)$ on $\wedge^* H_1(F,S_+;\F_2)$ fit into a TQFT framework, with associated gluing results. Indeed, \cite{ManionRouquier} reformulates and strengthens Douglas--Manolescu's gluing theorem for the algebras $\A(\Zc)$, which applies for certain decompositions of surfaces along 1-manifolds (given by certain decompositions of the arc diagram $\Zc$). One could hope that such gluing theorems exist in even greater generality for the decategorified surface invariants $\wedge^* H_1(F,S_+;\F_2)$, yielding a TQFT-like construction for 1- and 2-manifolds.

\begin{remark}
Heegaard Floer homology is, in some non-axiomatic sense, a 4-dimensional TQFT (spacetimes are 4-dimensional); accordingly, its decategorification should be a type of 3-dimensional TQFT involving the vector spaces $\wedge^* H_1(F,S_+;\F_2)$ (and e.g. the Alexander polynomials of knots). The constructions under consideration for 1- and 2-manifolds should be part of a (loosely defined) extended-TQFT structure for decategorified Heegaard Floer homology.
\end{remark}

A first observation is that a sutured surface $(F,S_+,S_-,\Lambda)$ is nearly the same data as a morphism in the 2-dimensional open-closed cobordism category. As described in \cite{LaudaPfeiffer}, the objects of this category are finite disjoint unions of oriented intervals and circles. For two such objects $X,Y$, a morphism from $X$ to $Y$ is a compact oriented surface with its boundary decomposed into black regions (identified with $X \sqcup Y$) and colored regions. If $(F,S_+,S_-,\Lambda)$ is a sutured surface and we label each component of $S_+$ as ``incoming'' or ``outgoing,'' we get a morphism from $S_+^{\mathrm{in}}$ to $S_+^{\mathrm{out}}$ in this cobordism category. The black part of the boundary is $S_+$ and the colored part is $S_-$.

The actions of $\F_2[E]/(E^2)$ on $\wedge^* H_1(F,S_+;\F_2)$ suggest that one could try to assign the category of finite-dimensional $\F_2[E]/(E^2)$-modules to an interval. A sutured surface, with its $S_+$ boundary components labeled as incoming or outgoing, would be assigned a bimodule over tensor powers of $\F_2[E]/(E^2)$. For simplicity, we will restrict our attention here to sutured surfaces with no circular $S_+$ boundary components (all components of $S_+$ are intervals). 

For a surface $F_1$ with $m$ intervals in its outgoing boundary and another surface $F_2$ with $m$ intervals in its incoming boundary, let $F = F_2 \cup_{[0,1]^m} F_1$. We would want the bimodule of $F$ to be a tensor product over $(\F_2[E]/(E^2))^{\otimes m}$ of the bimodules assigned to $F_1$ and $F_2$. The next theorem says this is true up to isomorphism.

\begin{theorem}\label{thm:IntroTQFT}
For $F_1$, $F_2$, and $F$ as above, we have
\[
\wedge^* H_1(F,S_+;\F_2) \cong \wedge^* H_1(F_2,S_+;\F_2) \otimes_{(\F_2[E]/(E^2))^{\otimes m}} \wedge^* H_1(F_1,S_+;\F_2).
\]
Thus, the vector spaces $\wedge^* H_1(F,S_+;\F_2)$ give a functor from the ``open sector'' of the open-closed cobordism category into (algebras, bimodules up to isomorphism).
\end{theorem}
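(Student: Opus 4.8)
\medskip\noindent
\textbf{Proof strategy.} The plan is to reduce the displayed isomorphism to a Mayer--Vietoris computation of $H_1(F,S_+;\F_2)$ together with an elementary identity about exterior algebras and contraction operators; the functoriality statement is then essentially formal. Write $R=\F_2[E]/(E^2)$, so the algebra appearing in the theorem is $R^{\otimes m}$, and recall that $\Phi_I$ is the contraction operator $\iota_{\phi_I}$ on the relevant group $\wedge^* H_1(\cdot\,,S_+;\F_2)$.

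First I would fix the topology of the gluing. The glued arcs $J\cong[0,1]^m$ are $m$ of the interval components of $S_+(F_1)$ and of $S_+(F_2)$; after gluing, each becomes a properly embedded arc in $F$ with both endpoints in the interior of $S_-(F)$, so that $F_1\sqcup F_2$ is recovered by cutting $F$ along these $m$ arcs. Writing $C_i\subseteq S_+(F_i)$ for the union of the \emph{non}-glued interval components, we have $S_+(F)=C_1\sqcup C_2$ with $C_1\cap C_2=\emptyset$. The relative Mayer--Vietoris sequence for $(F;F_1,F_2)$ relative to $(S_+(F);C_1,C_2)$, spliced with the long exact sequences of the triples $(F_i,S_+(F_i),C_i)$, should yield
\[
0\longrightarrow H_1(F_1,C_1)\oplus H_1(F_2,C_2)\longrightarrow H_1(F,S_+)\longrightarrow U_1\cap U_2\longrightarrow 0,
\]
where $U_i:=\operatorname{im}\bigl(\bar\phi^{(i)}\colon H_1(F_i,S_+(F_i))\to\F_2^m\bigr)$ with $\bar\phi^{(i)}=(\phi_{I_1},\dots,\phi_{I_m})$ for the $m$ glued intervals of $F_i$, and $K_i:=H_1(F_i,C_i)=\ker\bar\phi^{(i)}$. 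Here one checks that the triple boundary map is exactly $\bar\phi^{(i)}$ (composing $\partial$ with $H_0(S_+(F_i))\to H_0(S_+(F_i),C_i)\cong\F_2^m$ records precisely the coefficients of the glued intervals) and that the Mayer--Vietoris connecting map is the sum of the two inclusion-induced maps, whose kernel is $U_1\cap U_2$ by exactness of the two triple sequences; components of $F_i$ on which all of $S_+$ is glued make some $H_0$-groups nonzero but affect only the $H_0$ portion of these sequences.

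Now I would pass to exterior algebras. Over the field $\F_2$ both sequences split, so using $\wedge^*(A\oplus B)\cong\wedge^*A\otimes\wedge^*B$ one obtains $\wedge^*H_1(F_i,S_+)\cong\wedge^*K_i\otimes_{\F_2}M(U_i)$ with $R^{\otimes m}$ acting through the second factor, where for a subspace $U\subseteq\F_2^m$ we set $M(U):=\wedge^*U$ with the $j$th generator of $R^{\otimes m}$ acting by contraction with the $j$th coordinate functional restricted to $U$; similarly $\wedge^*H_1(F,S_+)\cong\wedge^*K_1\otimes_{\F_2}\wedge^*K_2\otimes_{\F_2}\wedge^*(U_1\cap U_2)$. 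After pulling the $\wedge^*K_i$ out of the balanced tensor product, the theorem reduces to the identity $M(U_2)\otimes_{R^{\otimes m}}M(U_1)\cong M(U_1\cap U_2)$ for arbitrary $U_1,U_2\subseteq\F_2^m$. To prove this, note that over $\F_2$ the defining relations of the balanced tensor product identify $M(U_2)\otimes_{R^{\otimes m}}M(U_1)$ with $\wedge^*(U_2\oplus U_1)\big/\sum_{j=1}^m\operatorname{im}(\iota_{\psi_j})$, where $\psi_j\in(U_2\oplus U_1)^*$ sends $(u_2,u_1)$ to $E_j^*(u_2+u_1)$; an elementary lemma --- proved by choosing a basis of $\operatorname{span}(\psi_1,\dots,\psi_m)$ and a dual partial basis of $U_2\oplus U_1$ --- shows that $\wedge^*P\big/\sum_j\operatorname{im}(\iota_{\psi_j})\cong\wedge^*\!\bigl(\bigcap_j\ker\psi_j\bigr)$ compatibly with the residual contraction operators, and here $\bigcap_j\ker\psi_j=\{(u,u):u\in U_1\cap U_2\}$ because over $\F_2$ the conditions $E_j^*(u_1+u_2)=0$ force $u_1=u_2$. ($\F_2$-coefficients are used both here and in rewriting the tensor relation as $\iota_{\psi_j}=\iota\otimes1+1\otimes\iota$.)

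It remains to match the residual $R$-module structures from the non-glued intervals and to check functoriality. For the former I would choose the complement of $K_1\oplus K_2$ in $H_1(F,S_+)$ topologically --- as classes of relative cycles of $F$ obtained by gluing relative cycles of $F_1$ and $F_2$ --- and then, for a non-glued interval $I'$ of $F_i$, decompose $\phi_{I'}$ into a functional supported on $K_i$ plus a linear combination of the glued functionals; under the identifications above this combination becomes exactly the $R^{\otimes m}$-action on $M(U_1\cap U_2)$, which is why the module-level (and not merely vector-space-level) form of the identity above is needed. Granting this, the invariant of the identity cobordism on an interval (a disk with two $S_+$-arcs) computes directly to the regular $R$-bimodule, and since the target --- algebras, with isomorphism classes of bimodules composed by tensor product --- is an honest category, the assignment is a functor on the open sector. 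I expect the main obstacle to be precisely this bookkeeping: pinning down every map in the combined Mayer--Vietoris and triple sequences (especially the connecting homomorphism and the degenerate components), and then carrying the functionals $\phi_{I'}$ through the chain of identifications carefully enough that the isomorphism is seen to be one of bimodules rather than merely of vector spaces. Once the short exact sequence is in hand, the exterior-algebra computation itself is short and self-contained.
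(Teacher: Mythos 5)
Your argument is correct and takes a genuinely different route from the paper's. The paper derives the theorem by iterating a self-gluing statement (Lemma~\ref{lem:MainGluing}) and proves that lemma by a five-way case analysis (on whether $I_1,I_2$ lie on the same component of $F$, of $\partial F$, and whether any other $S_+$ components remain), fixing a standard model for the sutured surface in each case with a hand-picked basis of arcs and circles for $H_1(F,S_+;\F_2)$ and exhibiting the isomorphism monomial by monomial. You instead compute $H_1(F,S_+;\F_2)$ uniformly via the relative Mayer--Vietoris sequence for $F = F_1 \cup_J F_2$ together with the triple sequences for $(F_i,S_+(F_i),C_i)$, obtaining the split short exact sequence $0 \to K_1 \oplus K_2 \to H_1(F,S_+;\F_2) \to U_1 \cap U_2 \to 0$, and then reduce to a clean, self-contained algebraic lemma: for subspaces $U_1,U_2 \subseteq \F_2^m$, writing $M(U) := \wedge^* U$ with the contraction action of $(\F_2[E]/(E^2))^{\otimes m}$, one has $M(U_2) \otimes_{(\F_2[E]/(E^2))^{\otimes m}} M(U_1) \cong M(U_1\cap U_2)$. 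This removes the case analysis entirely, isolates the algebraic mechanism (over $\F_2$ the balanced-tensor relations become contractions $\iota_{\psi_j}$, and $\bigcap_j \ker\psi_j$ is the diagonal copy of $U_1\cap U_2$), and recovers the paper's added flexibility --- non-glued $S_+$ circles and intervals --- for free, since these simply enlarge the $K_i$. What the paper's approach buys is explicitness: the intertwining of the residual $\F_2[E]/(E^2)$-actions is read off directly from the pictures, whereas in your proof that intertwining is precisely the bookkeeping you flag, requiring a choice of splitting compatible across $F$, $F_1$, $F_2$ (gluing relative $1$-cycles of $F_1$ and $F_2$ along $J$ does the job, since for a non-glued interval $I'\subset F_i$ the boundary of the glued cycle along $I'$ agrees with that of the $F_i$-cycle). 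One small slip worth fixing: what you call ``the Mayer--Vietoris connecting map'' (the map with kernel $U_1\cap U_2$) is actually the \emph{subsequent} map $H_0(J) \to H_0(F_1,C_1)\oplus H_0(F_2,C_2)$; the connecting map is $\delta\colon H_1(F,S_+) \to H_0(J)$, and it is its \emph{image} that equals $U_1\cap U_2$ by exactness.
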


In fact, a slightly more general version of Theorem~\ref{thm:IntroTQFT} holds in which $F_1$ and $F_2$ can have $S_+$ circles in their boundaries as long as we are not gluing along them; see Theorem~\ref{thm:TQFTGluing} below.

\subsection*{The tensor product case}

As a special case of Theorem~\ref{thm:IntroTQFT}, we can glue interval $S_+$ components of two surfaces $F', F''$ to the two input intervals of the ``open pair of pants'' cobordism shown in Figure~\ref{fig:OpenPairOfPants}. Let $P = F_1$ be the open pair of pants, let $F_2 = F' \sqcup F''$, and let $F$ be the glued surface. We can identify $\wedge^* H_1(P,S_+;\F_2)$ with $(\F_2[E]/(E^2))^{\otimes 2}$, with right action of $(\F_2[E]/(E^2))^{\otimes 2}$ given by multiplication and left action of $\F_2[E] / (E^2)$ given by the coproduct
\[
\Delta(E) = E \otimes 1 + 1 \otimes E
\]
(in fact, $\F_2[E] / (E^2)$ is a Hopf algebra with this coproduct together with counit $\varepsilon(E) = 0$ and antipode $S(E) = E$).

\begin{figure}
    \centering
    \includegraphics[scale=0.7]{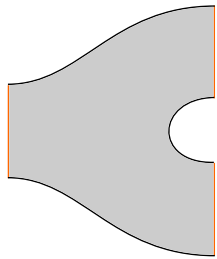}
    \caption{The open pair of pants; the $S_+$ boundary is shown in orange and the $S_-$ boundary is shown in black (loosely following the visual conventions of \cite{Zarev}). Specifically, the input $S_+$ boundary is on the right while the output $S_+$ boundary is on the left.}
    \label{fig:OpenPairOfPants}
\end{figure}

\begin{corollary}\label{cor:IntroTensor}
We have 
\[
\wedge^* H_1(F,S_+;\F_2) \cong \wedge^* H_1(F',S_+;\F_2) \otimes \wedge^* H_1(F'',S_+;\F_2) 
\]
where the tensor product $\otimes$ is taken in the tensor category of finite-dimensional modules over the Hopf algebra $\F_2[E]/(E^2)$.
\end{corollary}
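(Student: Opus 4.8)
The plan is to deduce the corollary from Theorem~\ref{thm:IntroTQFT}, applied with $F_1 = F' \sqcup F''$ and $F_2 = P$ (if $F'$ or $F''$ has $S_+$ circles, one instead invokes the more general Theorem~\ref{thm:TQFTGluing}, which applies since we glue only along intervals). Here $F_1 = F' \sqcup F''$, with $I'$ and $I''$ labeled as its two outgoing intervals (the legs of $F'$ and $F''$ that get glued), has $m = 2$ intervals in its outgoing boundary, and $F_2 = P$ has $m = 2$ intervals in its incoming boundary, so $F = F_2 \cup_{[0,1]^2} F_1$ is exactly the surface of the statement. Theorem~\ref{thm:IntroTQFT} then supplies an isomorphism
\[
\wedge^* H_1(F,S_+;\F_2) \;\cong\; \wedge^* H_1(P,S_+;\F_2) \;\otimes_{(\F_2[E]/(E^2))^{\otimes 2}}\; \wedge^* H_1(F'\sqcup F'',S_+;\F_2),
\]
in which $\wedge^* H_1(P,S_+)$ is a right module over $(\F_2[E]/(E^2))^{\otimes 2}$ via the two incoming intervals of $P$, and $\wedge^* H_1(F'\sqcup F'',S_+)$ is a left module over the same algebra via $I'$ and $I''$. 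The invariant $\wedge^* H_1(P,S_+)$ carries a residual left $\F_2[E]/(E^2)$-action coming from the outgoing interval $I_0$ of $P$; this action survives in the tensor product, and identifying it with the module-category tensor product is the content of the corollary.

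The second step is to verify the description of $\wedge^* H_1(P,S_+;\F_2)$ as a bimodule. Topologically $P$ is a disk whose boundary is divided by the six points of $\Lambda$ into three alternating $S_+$ intervals $I_0$ (outgoing), $I_1$, $I_2$ (incoming) and three $S_-$ arcs. The long exact sequence of the pair $(P,S_+)$ identifies $H_1(P,S_+;\F_2)$ with $\ker\bigl(H_0(S_+;\F_2) \to H_0(P;\F_2)\bigr) \cong \F_2^2$, with basis the classes $a_1, a_2$ of embedded arcs from $I_0$ to $I_1$ and from $I_0$ to $I_2$; thus $\wedge^* H_1(P,S_+)$ has $\F_2$-basis $1, a_1, a_2, a_1\wedge a_2$. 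Computing the functionals $\phi_{I_j}$ directly from the definition (take the $H_0(S_+)$-boundary of a class and pair with $[I_j]$) yields $\phi_{I_1}(a_1) = \phi_{I_2}(a_2) = \phi_{I_0}(a_1) = \phi_{I_0}(a_2) = 1$ and $\phi_{I_1}(a_2) = \phi_{I_2}(a_1) = 0$. Hence the $\F_2$-linear isomorphism $a_1\wedge a_2 \mapsto 1\otimes 1$, $a_2\mapsto E\otimes 1$, $a_1\mapsto 1\otimes E$, $1\mapsto E\otimes E$ carries $\Phi_{I_1}$ and $\Phi_{I_2}$ to multiplication by $E\otimes 1$ and $1\otimes E$ and carries $\Phi_{I_0}$ to multiplication by $\Delta(E) = E\otimes 1 + 1\otimes E$; this is the bimodule structure on $\wedge^* H_1(P,S_+)$ asserted in the corollary.

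The third step is to unwind the tensor product. Since $F'\sqcup F''$ is disconnected with $I'$ in the $F'$-component and $I''$ in the $F''$-component, we have $H_1(F'\sqcup F'',S_+) \cong H_1(F',S_+)\oplus H_1(F'',S_+)$, so $\wedge^* H_1(F'\sqcup F'',S_+) \cong \wedge^* H_1(F',S_+)\otimes_{\F_2}\wedge^* H_1(F'',S_+)$ as a left $(\F_2[E]/(E^2))^{\otimes 2}$-module, with $E\otimes 1$ acting by $\Phi_{I'}\otimes\id$ and $1\otimes E$ acting by $\id\otimes\Phi_{I''}$ (each $\phi_I$ factors through the $H_0(S_+)$ of the component containing $I$, so it sees only one summand). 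Substituting, using the identification of $\wedge^* H_1(P,S_+)$ with $(\F_2[E]/(E^2))^{\otimes 2}$, and using that $(\F_2[E]/(E^2))^{\otimes 2}$ is free of rank one as a right module over itself, the isomorphism of Theorem~\ref{thm:IntroTQFT} becomes, as $\F_2$-vector spaces,
\[
\wedge^* H_1(F,S_+;\F_2) \;\cong\; (\F_2[E]/(E^2))^{\otimes 2}\otimes_{(\F_2[E]/(E^2))^{\otimes 2}}\bigl(\wedge^* H_1(F',S_+)\otimes_{\F_2}\wedge^* H_1(F'',S_+)\bigr) \;\cong\; \wedge^* H_1(F',S_+)\otimes_{\F_2}\wedge^* H_1(F'',S_+).
\]
Under the canonical identification $R\otimes_R N \cong N$, the residual left action of $E$, which on the factor $R = (\F_2[E]/(E^2))^{\otimes 2}$ is multiplication by $\Delta(E) = E\otimes 1 + 1\otimes E$, transports to $\Phi_{I'}\otimes\id + \id\otimes\Phi_{I''}$. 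By Theorem~\ref{thm:IntroDecategorifiedUActions}, $\Phi_{I'}$ and $\Phi_{I''}$ are the actions of $E$ defining the $\F_2[E]/(E^2)$-module structures on $\wedge^* H_1(F',S_+)$ and $\wedge^* H_1(F'',S_+)$, so this operator is exactly the action of $E$ on their tensor product in the tensor category of $\F_2[E]/(E^2)$-modules determined by the coproduct $\Delta$, which is the claim.

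I expect the one genuinely non-formal ingredient to be the homological identification of $\wedge^* H_1(P,S_+)$ together with the left/right bookkeeping of the module structures throughout the chain of isomorphisms; the remaining steps are routine once those are settled. A secondary point to handle with care is the compatibility of the external product decomposition of $\wedge^* H_1(F'\sqcup F'',S_+)$ with the two commuting copies of $\F_2[E]/(E^2)$, which follows from the \emph{naturality} of the maps $\phi_I$ under the inclusions of the components of $F'\sqcup F''$.
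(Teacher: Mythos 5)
Your proof is correct and follows essentially the same route as the paper's own argument in Section~\ref{sec:Tensor}: identify $\wedge^* H_1(P,S_+;\F_2)$ with $(\F_2[E]/(E^2))^{\otimes 2}$ as a bimodule (your $a_1,a_2$ are the paper's $e_1,e_2$, and $a_1\wedge a_2$ is the paper's chosen free generator), apply Theorem~\ref{thm:TQFTGluing} with $F_1 = F' \sqcup F''$ and $F_2 = P$, and simplify $R \otimes_R N \cong N$ to extract the coproduct action. The extra details you supply (the long exact sequence computation of $H_1(P,S_+)$, the explicit $\phi_{I_j}$ values, the remark on naturality of $\phi_I$ under disjoint union) are consistent with and fill in what the paper reads off from Figure~\ref{fig:TensorProduct}.
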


We can view Corollary~\ref{cor:IntroTensor} as a decategorification of the gluing result from \cite{ManionRouquier} based on the higher tensor product operation $\ootimes$. Thus, Theorem~\ref{thm:IntroTQFT} suggests (at least at the decategorified level) a more general TQFT framework for the $\ootimes$-based gluing results of \cite{ManionRouquier}.

\subsection*{Relationship to other work}

Probably the closest analogue to the structures considered here can be found in Honda--Kazez--Mati{\'c}'s paper \cite{HKMTQFT}. The data of a sutured surface $(F,S_+,S_-,\Lambda)$ as discussed here is equivalent to the data $(\Sigma,F)$ considered in \cite[Section 7.1]{HKMTQFT} (our $F$ is Honda--Kazez--Mati{\'c}'s $\Sigma$ and our $\Lambda$ is their $F$). The vector space $\wedge^* H_1(F,S_+;\F_2)$ is isomorphic to an $\F_2$ version of Honda--Kazez--Mati{\'c}'s $V(\Sigma, F)$ which was subsequently studied by Mathews \cite{Mathews1, Mathews2, Mathews3, Mathews4, MathewsSchoenfeld}. In our notation, Honda--Kazez--Mati{\'c} view this vector space as the sutured Floer homology of $F \times S^1$ with sutures given by $\Lambda \times S^1$, rather than as a Grothendieck group associated to $\A(\Zc)$. In other words, their surface invariants come from ``trace decategorification'' of 3-dimensional Heegaard Floer invariants rather than from Grothendieck-group-based decategorification of 2-dimensional Heegaard Floer invariants; these notions often agree, as they do here. See Cooper \cite{Cooper} for related work in the contact setting that discusses vector spaces similar to $\wedge^* H_1(F,S_+;\F_2)$ in relation to Grothendieck groups of formal contact categories.

We can think of the gluings in Theorem~\ref{thm:IntroTQFT} as successive self-gluings of two $S_+$ intervals in a sutured surface. These gluings can be interpreted as special cases of Honda--Kazez--Mati{\'c}'s gluings, where their gluing subsets $\gamma, \gamma'$ cover our gluing $S_+$ intervals and extend a small bit past them on both sides. However, Honda--Kazez--Mati{\'c} only assert the existence of a gluing map from the vector space of the original surface to the vector space of the glued surface (satisfying certain properties). Theorem~\ref{thm:IntroTQFT} goes farther for the special gluings under consideration in that it shows how the vector space of the larger surface is recovered up to isomorphism as a tensor product.

Integral versions of the vector spaces $\wedge^*(F,S_+;\F_2)$, especially for closed $F$ or $F$ with one boundary component (and implicitly $|\Lambda| = 2$), have also been studied in the context of TQFT invariants for 3-manifolds starting with Frohman and Nicas in \cite{FrohmanNicas} (see also \cite{DonaldsonTQFT, KerlerHomologyTQFT}). Building on work of Petkova \cite{PetkovaDecat}, Hom--Lidman--Watson show in \cite{HLW} that bordered Heegaard Floer homology (in the original formulation of \cite{LOTBorderedOrig} where $F$ is closed) can be viewed as categorifying the 2+1 TQFT described in \cite{DonaldsonTQFT} in which a surface $F$ is assigned $\wedge^* H_1 (F)$. Our perspective here differs in that we follow Zarev \cite{Zarev} rather than \cite{LOTBorderedOrig} and in that instead of 2+1 TQFT structure we are (loosely) looking at the lower two levels of a 1+1+1 TQFT.

Finally, the fact that the topological gluing considered in \cite{ManionRouquier} can be viewed as the above open-pair-of-pants gluing was already noted in \cite[Section 7.2.5]{ManionRouquier}, which also contains speculations about the connection to open-closed TQFT and extended TQFT. 

\subsection*{Future directions}

It would be desirable to treat 1-, 2-, and 3-manifolds at the same time, integrating the gluing results for surfaces here with the 3-manifold invariants mentioned above in something like a 1+1+1 TQFT. One obstacle to doing this appears to be that the isomorphism in the statement of Theorem~\ref{thm:IntroTQFT} is not canonical and depends on suitable choices of bases. Given arbitrary elements of $\wedge^* H_1(F_1,S_+;\F_2)$ and $\wedge^* H_1(F_2,S_+;\F_2)$, it is not clear how to pair them to get an element of $\wedge^* H_1(F,S_+;\F_2)$ in a canonical way. 

It would also be desirable to categorify Theorem~\ref{thm:IntroTQFT}, such that the $\ootimes$-based gluing results of \cite{ManionRouquier} are recovered by gluing with an open pair of pants as in Corollary~\ref{cor:IntroTensor}. Just as the proof of Theorem~\ref{thm:IntroTQFT} depends on a choice of basis, it seems likely that a categorification of this theorem will depend on the arc diagrams $\Zc$ chosen to represent the surfaces. For general arc diagrams $\Zc_1$ and $\Zc_2$ representing the surfaces $F_1$ and $F_2$ of Theorem~\ref{thm:IntroTQFT}, it is not even clear how one should glue these diagrams to get an arc diagram for the glued surface $F$ (speculatively, something like \cite[Figure 5(b)]{KaufmannPenner} followed by an ``unzip'' operation may be relevant). 

Finally, preliminary computations indicate that close relatives of $\wedge^* H_1(F,S_+;\F_2)$ should arise in a TQFT with better structural properties than the ``open'' TQFT considered here, specifically one that is extended down to points and defined at least for all 0-, 1-, and 2-manifolds, with appropriate gluing theorems (including for gluing along circles). In work in progress, we study this extended TQFT as well as its relationship to the constructions of this paper.

\subsection*{Organization}

In Section~\ref{sec:U} through \ref{sec:HigherActions} we review $\Uc$, the algebras $\A(\Zc)$, and the higher actions from \cite{ManionRouquier}. Section~\ref{sec:Decategorification} discusses decategorification for $\Uc$ and $\A(\Zc)$, showing that in the sense considered here, $\A(\Zc)$ categorifies $\wedge^* H_1(F,S_+;\F_2)$. Section~\ref{sec:HomologyActions} decategorifies the 2-actions of $\Uc$ on $\A(\Zc)$ from \cite{ManionRouquier} and proves Theorem~\ref{thm:IntroDecategorifiedUActions}. Section~\ref{sec:TQFT} proves a generalized version of Theorem~\ref{thm:IntroTQFT}, and Section~\ref{sec:Tensor} discusses Corollary~\ref{cor:IntroTensor} in more generality.

\subsection*{Acknowledgments}

We would like to thank Bojko Bakalov, Corey Jones, Robert Lipshitz, and Rapha{\"e}l Rouquier for useful conversations. This research was supported by NSF grant number DMS-2151786.

\section{Decategorifying higher actions on strands algebras}

\subsection{The dg monoidal category \texorpdfstring{$\Uc$}{U}}\label{sec:U}

The following definition originated in \cite{KhovOneHalf} and was partly inspired by the strands dg algebras $\A(\Zc)$ in Heegaard Floer homology (we review these in Section~\ref{sec:StrandsAlgs}). While Khovanov works over $\Z$, we work over $\F_2$ instead to interact properly with the $\F_2$-algebras $\A(\Zc)$.

\begin{definition}
Let $\Uc$ denote the strict $\F_2$-linear dg monoidal category freely generated (under $\otimes$ and composition) by an object $e$ and an endomorphism $\tau$ of $e \otimes e$ modulo the relations $\tau^2 = 0$ and
\[
(\id_e \otimes \tau) \circ (\tau \otimes \id_e) \circ (\id_e \otimes \tau) = (\tau \otimes \id_e) \circ (\id_e \otimes \tau) \circ (\tau \otimes \id_e).
\]
We set $d(\tau) = 1$, and we let $\tau$ have degree $-1$ (we use the convention that differentials increase degree by $1$).
\end{definition}

The endomorphism algebra of $e^{\otimes n} \in \Uc$ is the dg algebra referred to as $H_n^-$ in \cite{KhovOneHalf} (tensored with $\F_2$); in the language used in \cite{ManionRouquier} it is a nilHecke algebra with a differential and in the language used in \cite{DM} it is a nilCoxeter algebra. We will use $NC_n$ to denote the $\F_2$ version of this algebra. It has a graphical interpretation: $\F_2$-basis elements of $NC_n$ are pictures like Figure~\ref{fig:NCn_basis}, with $n$ strands going from bottom to top (these pictures are in bijection with permutations on $n$ letters). Multiplication is defined by vertical concatenation, with $ab$ obtained by drawing $a$ below $b$, except that if two strands cross and then uncross in the stacked picture (i.e. if the stacked picture has a double crossing) then the product is defined to be zero. The differential is defined by summing over all ways to resolve a crossing (see Figure~\ref{fig:CrossingResolution}), except that if a crossing resolution produces a double crossing between two strands then it contributes zero to the differential (see Figure~\ref{fig:DisallowedResolution}). The endomorphism $\tau$ of $e \otimes e$ is represented by a single crossing between two strands.

\begin{figure}
    \centering
    \includegraphics[scale=0.7]{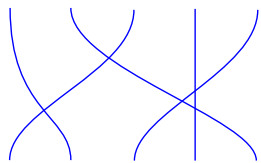}
    \caption{A basis element of $NC_n$ for $n=5$.}
    \label{fig:NCn_basis}
\end{figure}

\begin{figure}
    \centering
    \includegraphics[scale=0.7]{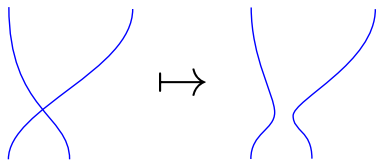}
    \caption{Resolving a crossing.}
    \label{fig:CrossingResolution}
\end{figure}

\begin{figure}
    \centering
    \includegraphics[scale=0.7]{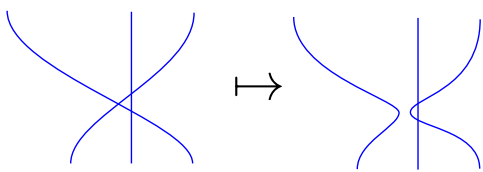}
    \caption{A resolution that produces a double crossing and thus does not contribute to the differential on $NC_n$.}
    \label{fig:DisallowedResolution}
\end{figure}

\subsection{Strands algebras}\label{sec:StrandsAlgs}

Let $\Zc$ be an arc diagram as in \cite[Definition 2.1.1]{Zarev}, except that we allow (oriented) circles as well as intervals in $\mathbf{Z}$, and we do not impose any non-degeneracy condition. Thus, $\Zc$ consists of:
\begin{itemize}
    \item a finite collection $\mathbf{Z} = \{Z_1, \ldots, Z_l\}$ of oriented intervals and circles;
    \item a finite set of points $\mathbf{a}$ (with $|\mathbf{a}|$ even) in the interiors of the $Z_i$ for $1 \leq i \leq l$;
    \item a 2-1 matching $M$ of the points in $\mathbf{a}$.
\end{itemize}
An example is shown in Figure~\ref{fig:ArcDiagram}.

\begin{figure}
    \centering
    \includegraphics[scale=0.7]{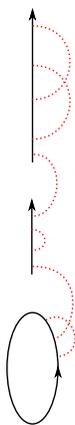}
    \caption{An arc diagram $\Zc = (\mathbf{Z},\mathbf{a}, M)$; $\mathbf{Z}$ consists of two intervals and a circle, $\mathbf{a}$ is the set of endpoints of the red (dotted) arcs, and $M$ matches the two endpoints of each red arc.}
    \label{fig:ArcDiagram}
\end{figure}

The definition of the dg strands algebra $\A(\Zc)$ over $\F_2$, from \cite[Definition 2.2.2]{Zarev}, generalizes in a straightforward way to this setting and is a special case of the general strands algebras treated in detail in \cite{ManionRouquier}. One can view $\A(\Zc)$ as being defined by specifying an $\F_2$ basis consisting of certain pictures, along with rules for multiplying and differentiating basis elements. 

\begin{figure}
    \centering
    \includegraphics[scale=0.7]{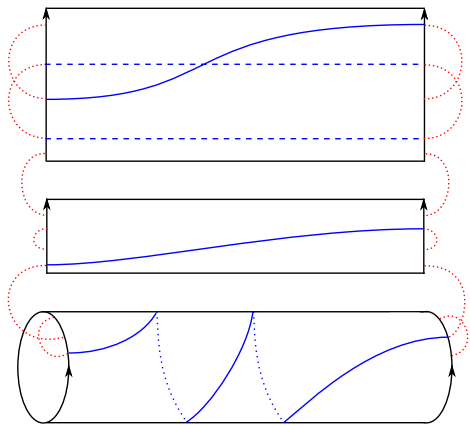}
    \caption{A strands picture (basis element for $\A(\Zc)$).}
    \label{fig:StrandsPicture}
\end{figure}

\begin{definition}\label{def:StrandsPicture}
A \emph{strands picture} is a collection of strands drawn in $[0,1] \times \mathbf{Z}$, each with its left endpoint in $\{0\} \times \mathbf{a}$ and its right endpoint in $\{1\} \times \mathbf{a}$. The strands can be either solid or dotted and are considered only up to homotopy relative to the endpoints; by convention, strands are drawn ``taut,'' sometimes with a bit of curvature for visual effect (see Figure~\ref{fig:StrandsPicture}). They must satisfy the following rules:
\begin{itemize}
    \item Strands cannot move against the orientation of $\mathbf{Z}$ when moving from left to right (from $0$ to $1$ in $[0,1]$).
    \item No solid strands are horizontal, while all dotted strands are horizontal.
    \item If a solid strand has its left endpoint at $a \in \mathbf{a}$, and $a$ is matched to $a' \in \mathbf{a}$ under $M$, then no strand can have its left endpoint at $a'$, and similarly for right endpoints.
    \item If a dotted strand has its left (and thus right) endpoint at $a \in \mathbf{a}$, and $a$ is matched to $a' \in \mathbf{a}$ under $M$, then there must be another dotted strand with its left (and thus right) endpoint at $a'$ (we say this dotted strand is matched with the first one).
\end{itemize}
\end{definition}

\begin{figure}
    \centering
    \includegraphics[scale=0.7]{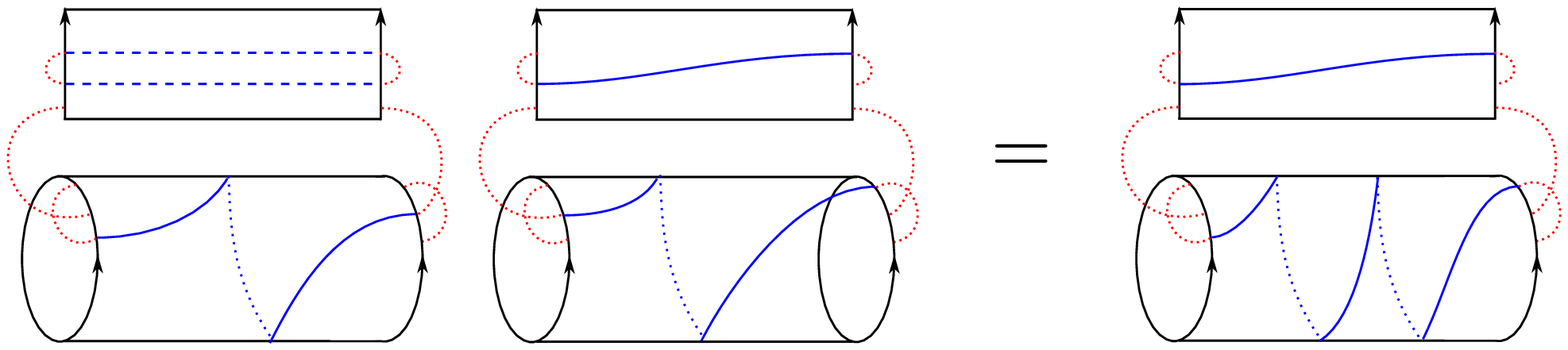}
    \caption{Example of a product in $\A(\Zc)$.}
    \label{fig:StrandsProduct}
\end{figure}

\begin{definition}\label{def:StrandsAlgebra}
As a $\F_2$-vector space, $\A(\Zc)$ is defined to be the formal span of such strands pictures, so that strands pictures form an $\F_2$ basis for $\A(\Zc)$. The product of two basis elements of $\A(\Zc)$ is defined by concatenation (see Figure~\ref{fig:StrandsProduct}), with the following subtleties:
\begin{itemize}
    \item If some solid strand has no strand to concatenate with, or if in some matched pair of dotted strands $\{s,s'\}$, neither $s$ nor $s'$ has a strand to concatenate with, the product is zero.
    \item When concatenating a solid strand with a dotted strand, one erases the dotted strand matched to the one involved in the concatenation, and makes the concatenated strand solid.
    \item If a double crossing is formed upon concatenation, the product of the basis elements is defined to be zero.
\end{itemize}

The differential of a basis element of $\A(\Zc)$ is the sum of all strands pictures formed by resolving a crossing in the original strands picture (in the sense of Figure~\ref{fig:CrossingResolution} above), with the following subtleties:
\begin{itemize}
    \item When resolving a crossing between a solid strand and a dotted strand, one erases the dotted strand matched to the one involved in the crossing resolution, and makes both the resolved strands solid.
    \item If a double crossing is formed upon resolving a crossing (as in Figure~\ref{fig:DisallowedResolution} above), then this crossing resolution does not contribute a term to the differential.
\end{itemize}
\end{definition}

\begin{remark}
While $\Uc$ is a dg category and not just a differential category (its morphism spaces are graded), the grading on $\A(\Zc)$ is much more complicated: it is a grading by a nonabelian group $G(\Zc)$ rather than by $\Zc$, and it depends on a choice of ``grading refinement data.'' To avoid these complications, gradings were not fully treated in \cite{ManionRouquier}; correspondingly, when decategorifying in this paper, we will work with Grothendieck groups defined over $\F_2$ rather than over $\Z$, and we will view $\A(\Zc)$ as a differential algebra.
\end{remark}

\begin{definition}
We let $\A(\Zc,k)$ be the $\F_2$-subspace of $\A(\Zc)$ spanned by strands pictures such that the number of solid strands plus half the number of dotted strands is $k$. In fact, $\A(\Zc,k)$ is a dg subalgebra of $\A(\Zc)$ (ignoring unit), and if $|\mathbf{a}| = 2n$, we have $\A(\Zc) = \bigoplus_{k=0}^{n} \A(\Zc,k)$.
\end{definition}

The basis elements of $\A(\Zc)$ with only dotted (horizontal) strands are idempotents of $\A(\Zc)$. Furthermore, for a general basis element $a$ of $\A(\Zc)$, there is exactly one such idempotent (call it $\lambda(a)$) such that $\lambda(a) a = a$, and for all other such idempotents $\lambda'$, we have $\lambda' a = 0$. We will refer to $\lambda(a)$ as the left idempotent of $a$; we can define a right idempotent $\rho(a)$ similarly.

Below we will identify $\A(\Zc)$ with the differential category whose objects are in bijection with the all-horizontal basis elements of $\A(\Zc)$, and whose morphism space from $e$ to $e'$ is $e'\A(\Zc)e$. Because each basis element of $\A(\Zc)$ has a unique left and right idempotent, we can view these elements as giving a basis for the morphism spaces of $\A(\Zc)$ as a category.

\subsection{Higher actions on strands algebras}\label{sec:HigherActions}

Let $\Zc = (\mathbf{Z}, \mathbf{a}, M)$ be an arc diagram; as in \cite[Section 7.2.4]{ManionRouquier}, we can view $\Zc$ as a singular curve $Z$ in the language of that paper, and $\A(\Zc)$ is the endomorphism algebra of a collection of objects in the strands category $\Sc(Z)$ (see \cite[Section 7.4.11]{ManionRouquier}). For an interval $I$ in $\mathbf{Z}$ (equivalently, a non-circular component of $Z$ as in \cite[Section 7.2.2]{ManionRouquier}), the constructions of \cite[Section 8.1.1]{ManionRouquier} give us a differential bimodule $E$ over $\A(\Zc)$ (we will call this bimodule $\Ec$ for notational clarity). Closely related constructions appear in \cite{DM}, although in that paper the relevant pictures were not explicitly organized into a bimodule over $\A(\Zc)$.

As with the strands algebras, the bimodule $\Ec$ is defined by specifying an $\F_2$-basis of strands pictures, together with a differential and left and right actions of $\A(\Zc)$ in terms of basis elements. These strands pictures are almost the same as those described in Definition~\ref{def:StrandsPicture}. To describe the difference, let $P$ be the endpoint of the interval $I$ such that in the orientation on $\mathbf{Z}$, $I$ points from $P$ to its other endpoint. Then, in a strands picture for $\Ec$, there should be one solid strand with its left endpoint at $(1/2, P) \in [0,1] \times \mathbf{Z}$ and with its right endpoint in $\{1\} \times \mathbf{a}$. See Figure~\ref{fig:EStrandsPicture}; all other rules in Definition~\ref{def:StrandsPicture} are unchanged.

\begin{figure}
    \centering
    \includegraphics[scale=0.7]{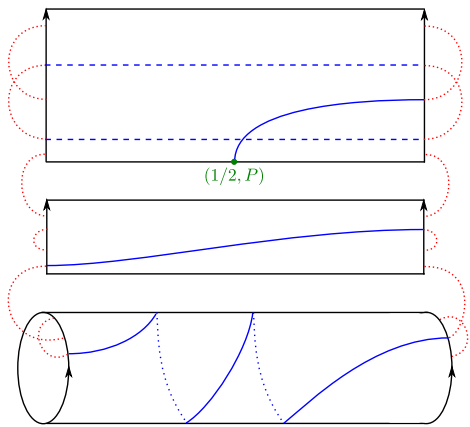}
    \caption{A strands picture for $\Ec$ (the distinguished interval $I$ is the top interval).}
    \label{fig:EStrandsPicture}
\end{figure}

\begin{definition}
As an $\F_2$-vector space, $\Ec$ is defined to be the formal span of the strands pictures described above, which form an $\F_2$-basis for $\Ec$. The left and right actions of $\A(\Zc)$ on $\Ec$, and the differential on $\Ec$, are defined by concatenation and resolution of crossings as in Definition~\ref{def:StrandsAlgebra}. We let $\Ec(k)$ be the $\F_2$-subspace of $\Ec$ spanned by strands pictures such that the number of solid strands plus half the number of dotted strands is $k$; then $\Ec(k)$ is a differential sub-bimodule of $\Ec$, and if $|\mathbf{a}| = 2n$, we have $\Ec = \bigoplus_{k=1}^n \Ec(k)$. Furthermore, $\Ec$ is a bimodule over $(\A(\Zc,k-1), \A(\Zc,k))$ with all other summands of $\A(\Zc)$ acting as zero on $\Ec$.
\end{definition}

\begin{figure}
    \centering
    \includegraphics[scale=0.7]{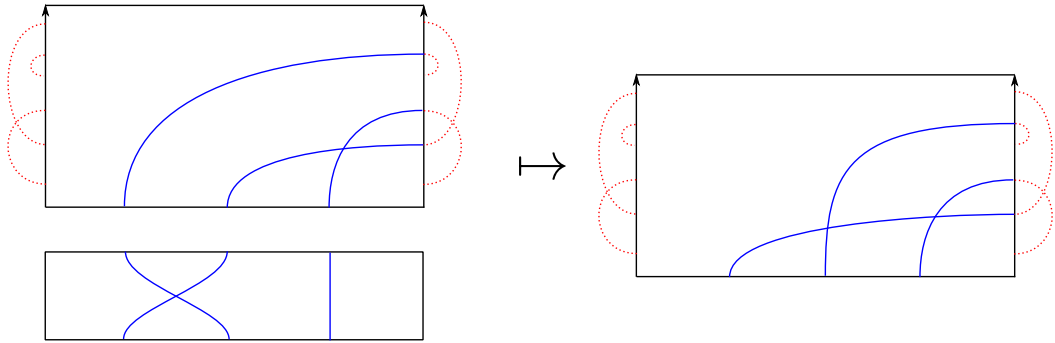}
    \caption{The action of an element of $NC_3$ on $\Ec^{\otimes 3}$.}
    \label{fig:NCAction}
\end{figure}

As with the basis elements of $\A(\Zc)$, to each basis element $x$ of $\Ec$ we can associate a left idempotent $\lambda(x)$ and a right idempotent $\rho(x)$. We have $x = \lambda(x) x \rho(x)$, while for any other purely-horizontal basis elements $\lambda' \neq \lambda(x)$, $\rho' \neq \rho(x)$ of $\A(\Zc)$, we have $\lambda' x = 0$ and $x \rho' = 0$.

By \cite[Lemma 8.1.2]{ManionRouquier}, the bimodule $\Ec \otimes_{\A(\Zc)} \Ec \otimes_{\A(\Zc)} \cdots \otimes_{\A(\Zc)} \Ec$ (with $m$ factors) is isomorphic to the bimodule defined analogously to $\Ec$, but having solid strands with left endpoints at $\{(\frac{1}{n+1}, P), (\frac{2}{n+1}, P), \ldots, (\frac{n}{n+1}, P)\}$. This bimodule (which we will call $\Ec^{\otimes m}$) also appears in \cite{DM}, and as in that paper it admits a left action of $NC_n$ defined diagrammatically by sticking strands pictures for $NC_n$ on the bottom of strands pictures for $\Ec^{\otimes m}$ (see Figure~\ref{fig:NCAction}). These actions form a 2-action of $\Uc$ on $\A(\Zc)$ via differential bimodules and bimodule maps, which was defined in \cite[Proposition 8.1.3]{ManionRouquier}. In other words, they give a differential monoidal functor from $\Uc$ to the dg monoidal category of differential bimodules over $\A(\Zc)$ and chain complexes of bimodule maps between them.

\subsection{Decategorification}\label{sec:Decategorification}

\subsubsection{Decategorifying $\Uc$}

\begin{definition}
For a differential category $A$, we let $\overline{A}$ denote the smallest full dg subcategory of $\Modl{A}$ (left differential modules over $A$) containing $\Hom(e,-)$ for all objects $e$ of $A$ and closed under mapping cones and isomorphisms. If $A$ is a dg category, we let $\Modl{A}$ be the category of left dg modules instead, and require that $\overline{A}$ be closed under degree shifts. We let $H(A)$ denote the homotopy category of $A$, and we let $A^i$ denote the idempotent completion of $A$.
\end{definition}

\begin{remark}
In the language of bordered Heegaard Floer homology \cite{LOTBorderedOrig, LOTBimodules}, $\overline{A}$ is essentially the same as the differential category of finitely generated bounded type $D$ structures over $A$ (in this setting it is typical to view $A$ as a differential algebra with a distinguished set of idempotents rather than as a dg category).
\end{remark}

It is a well-known result (see \cite[Corollary 3.7]{KellerOnDGCats}) that if $A$ is a dg category, then $H(\overline{A})^i$ is equivalent to the full subcategory of the derived category $\Dc(A)$ (of left dg $A$-modules) on compact objects, i.e. the compact derived category of $A$.

We can view dg algebras such as $NC_n$ as dg categories with one object. Khovanov shows in \cite{KhovOneHalf} that the Grothendieck group of the compact derived category of $NC_n$ is zero for $n \geq 2$. For $n = 0$ and $n=1$, $NC_n$ is $\F_2$, so the Grothendieck group of its compact derived category is $\Z$ (Khovanov gets $\Z[q,q^{-1}]$ instead because he introduces an extra $q$-grading on $NC_n$ which is identically zero, but we will not use this grading). 

\begin{corollary}
The Grothendieck group $K_0(H(\overline{NC_n}))$ is also $\Z$ for $n \in \{0,1\}$ and is zero for $n \geq 2$, where $H(\overline{NC_n})$ is the homotopy category of $\overline{NC_n}$.
\end{corollary}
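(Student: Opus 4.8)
The plan is to deduce this corollary directly from the two facts assembled just before its statement: first, the result attributed to \cite[Corollary 3.7]{KellerOnDGCats} that for a dg category $A$ the idempotent completion $H(\overline{A})^i$ is equivalent to the compact derived category $\Dc(A)^c$; and second, Khovanov's computation that $K_0(\Dc(NC_n)^c)$ is $\Z$ for $n \in \{0,1\}$ (where $NC_n \cong \F_2$) and $0$ for $n \geq 2$. The only gap to bridge is that the corollary asks about $K_0\bigl(H(\overline{NC_n})\bigr)$, without idempotent completion, so I would argue that passing to the idempotent completion does not change the Grothendieck group in this situation.

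First I would observe that $H(\overline{NC_n})$ is a triangulated category: $\overline{NC_n}$ is closed under mapping cones and (since $NC_n$ is viewed as a dg category) degree shifts, so its homotopy category inherits a triangulated structure from that of dg modules. For a triangulated category $\Tc$ with idempotent completion $\Tc^i$, the inclusion $\Tc \hookrightarrow \Tc^i$ induces an injection $K_0(\Tc) \hookrightarrow K_0(\Tc^i)$ on Grothendieck groups (this is Thomason's classification of dense triangulated subcategories, or simply the observation that every object of $\Tc^i$ is a direct summand of an object of $\Tc$, so every class in $K_0(\Tc^i)$ is a difference of classes coming from $\Tc$ up to the summand relation --- more carefully, $\Tc$ is dense in $\Tc^i$ and hence $K_0(\Tc) \to K_0(\Tc^i)$ is injective). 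Combined with the surjectivity coming from density, this gives $K_0(\Tc) = K_0(\Tc^i)$ provided the whole of $K_0(\Tc^i)$ is hit, which holds because density means every object of $\Tc^i$ is a summand of an object of $\Tc$ and hence, via $[X] + [Y] = [X \oplus Y]$, its class lies in the image. Applying this with $\Tc = H(\overline{NC_n})$ and $\Tc^i = H(\overline{NC_n})^i \simeq \Dc(NC_n)^c$ yields $K_0\bigl(H(\overline{NC_n})\bigr) \cong K_0\bigl(\Dc(NC_n)^c\bigr)$.

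It then remains only to invoke Khovanov's computation recalled above: the right-hand side is $\Z$ for $n \in \{0,1\}$ and $0$ for $n \geq 2$, completing the proof. For $n \in \{0,1\}$ one may alternatively note directly that $NC_n \cong \F_2$, so $\overline{NC_n}$ is the category of finite-dimensional chain complexes of $\F_2$-vector spaces (perfect complexes over a field), whose homotopy category is already idempotent complete with Grothendieck group $\Z$ generated by $[\F_2]$; this makes the $n \leq 1$ case self-contained and shows the idempotent-completion step is only doing work in the $n \geq 2$ case, where the group is zero anyway.

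I expect the main (mild) obstacle to be stating the density/idempotent-completion comparison for Grothendieck groups at the right level of generality and with a correct citation --- essentially Thomason's theorem that $K_0$ is insensitive to passing from a triangulated category to its idempotent completion, up to the subgroup generated by the "new" objects, which here is all of $K_0$ of the completion. Once that is in place, the rest is a direct combination of \cite{KellerOnDGCats} and \cite{KhovOneHalf} with no further computation. If one prefers to avoid citing Thomason, the $n \geq 2$ case can instead be handled by showing $H(\overline{NC_n})$ itself (not just its completion) has trivial $K_0$, e.g.\ because the generator $\Hom(e,-) = NC_n$ is already contractible or admits a "shift by one" type relation in $\overline{NC_n}$ forcing $[NC_n] = 0$; but routing through the idempotent completion is cleaner and I would present it that way.
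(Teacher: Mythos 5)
Your overall strategy mirrors the paper's (Keller's equivalence $H(\overline{A})^i \simeq \Dc(A)^c$, Thomason to compare Grothendieck groups across idempotent completion, then Khovanov's computation), but there is a genuine flaw in the middle step. The claim that density of $\Tc$ in $\Tc^i$ forces $K_0(\Tc) \to K_0(\Tc^i)$ to be surjective is false, and the justification you offer does not work: if $X \oplus Y \in \Tc$ with $X, Y \in \Tc^i$, then $[X]+[Y] = [X\oplus Y]$ lies in the image, but this says nothing about $[X]$ individually. In fact Thomason's Theorem 2.1 asserts the exact opposite of what you want here --- dense strictly full triangulated subcategories of $\Tc^i$ are in bijection with \emph{arbitrary} subgroups of $K_0(\Tc^i)$, with the bijection given by taking the image of $K_0$. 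So density gives injectivity (that is Thomason's Corollary 2.3, which you also cite), not surjectivity, and $K_0$ can genuinely shrink when you pass to a dense subcategory.

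Your conclusion is nonetheless salvaged by your own backup remarks. For $n \geq 2$ the target $K_0(\Dc(NC_n)^c)$ is zero, so injectivity alone gives $K_0(H(\overline{NC_n})) = 0$ (equivalently, as you note, $NC_n$ is acyclic for $n \geq 2$ since $d\tau = 1$, hence contractible, so $H(\overline{NC_n})$ is trivial). For $n \in \{0,1\}$ you correctly observe that $NC_n \cong \F_2$ and $H(\overline{\F_2})$ is already idempotent complete with $K_0 = \Z$, which makes the idempotent-completion comparison moot. The paper's actual proof is closer to this last observation than to your density argument: it first invokes Thomason's Corollary 2.3 for injectivity, then notes that by Thomason's Theorem 2.1 the category $H(\overline{NC_n})$ is \emph{already} idempotent complete (the image of $K_0$ is the whole group, so the corresponding dense subcategory is everything), so the comparison map is an isomorphism for all $n$ at once. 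If you repair your write-up, the cleanest route is to drop the false surjectivity-from-density claim and either argue idempotent-completeness directly as the paper does, or present the two cases $n \geq 2$ (injectivity suffices) and $n \leq 1$ (direct computation over $\F_2$) as your primary argument rather than as a fallback.
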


\begin{proof}
The inclusion of the triangulated category $H(\overline{NC_n})$ into its idempotent completion is a monomorphism by \cite[Corollary 2.3]{ThomasonTriangulatedSubcats}. In fact, by \cite[Theorem 2.1]{ThomasonTriangulatedSubcats}, $H(\overline{NC_n})$ is already idempotent complete.
\end{proof}

Since we will primarily work with Grothendieck groups over $\F_2$ here, we introduce the following definition.

\begin{definition}
Let $\Cc$ be a category equipped with a collection of distinguished triangles $X \to Y \to Z \rightsquigarrow$ as in a triangulated category (but we do not require $\Cc$ to be triangulated or even to have a shift functor). We let $K_0^{\F_2}(\Cc)$ be the $\F_2$-vector space with basis given by isomorphism classes of objects of $\Cc$ modulo relations $[X] + [Y] + [Z] = 0$ whenever there exists a distinguished triangle $X \to Y \to Z \rightsquigarrow$.
\end{definition}

For a triangulated category $\Cc$, the above definition agrees with $K_0(\Cc) \otimes \F_2$. We see that $K_0^{\F_2}(H(\overline{NC_n})$ is isomorphic to $\F_2$ for $n \in \{0,1\}$ and is zero otherwise.

Now, since $\Uc$ is a direct sum of $NC_n$ (as a one-object dg category) over all $n \geq 0$, we have $K_0^{\F_2}(H(\overline{\Uc})) \cong \F_2 \oplus \F_2$. For notational convenience, we let
\[
K_0^{\F_2}(\Uc) := K_0^{\F_2}(H(\overline{\Uc})).
\]
Taking the monoidal structure on $\Uc$ into account, we see that as an $\F_2$-algebra, we have
\[
K_0^{\F_2}(\Uc) \cong \F_2[E]/(E^2)
\]
(this is Khovanov's identification $K_0(H^-) \cong \Z[q,q^{-1},E_1]/(E_1^2)$ from \cite{KhovOneHalf}, adapted to our setting).

\subsubsection{Decategorifying the strands algebras}

As mentioned above, we will view the strands algebras $\A(\Zc)$ as differential categories with multiple (but finitely many) objects in bijection with the set of purely-horizontal strands pictures for $\Zc$. The homotopy category $H(\overline{\A(\Zc)})$ has a collection of distinguished triangles, namely those isomorphic to the image in the homotopy category of $X \xrightarrow{f} Y \to \Cone(f) \rightsquigarrow$ for some closed morphism $f\colon X \to Y$ in $\overline{\A(\Zc)}$.
\begin{proposition}[\cite{PetkovaDecat}]
For $\Zc = (\mathbf{Z}, \mathbf{a}, M)$ with $\mathbf{Z}$ a single interval, $K_0(H(\overline{\A(\Zc)}))$ is isomorphic to $\wedge^* H_1(F;\Z)$ where $F$ is the surface represented by $\Zc$. Specifically, for each $k$, $K_0(H(\overline{\A(\Zc, k)}))$ is isomorphic to $\wedge^k H_1(F;\Z)$.
\end{proposition}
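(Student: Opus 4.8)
The plan is to recall the structure of the strands algebra for a single interval, identify a small model for the compact derived category, and match up basis elements with elements of $\wedge^* H_1(F;\Z)$. The starting point is that when $\mathbf{Z}$ is a single interval $Z$ with $2n$ points, the arc diagram $\Zc$ is exactly the kind considered by Petkova in \cite{PetkovaDecat}, and $\A(\Zc,k)$ is a finite-dimensional differential algebra. The surface $F$ represented by such a $\Zc$ is planar (a disk with bands), so $H_1(F;\Z) \cong \Z^n$ with a basis indexed by the $n$ matched pairs of $M$; more precisely, $H_1(F, S_+;\Z)$ — but since $\mathbf{Z}$ is a single interval, $S_+$ is contractible and $H_1(F, S_+) \cong H_1(F)$. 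Thus $\wedge^k H_1(F;\Z)$ is free of rank $\binom{n}{k}$, with a natural basis indexed by $k$-element subsets of the set of matched pairs.

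\textbf{Main steps.} First I would set up the identification at the level of the idempotent ring: the purely-horizontal strands pictures (the objects of $\A(\Zc)$ as a category) are in bijection with subsets of the matched pairs of $M$, by recording which pairs carry a (matched) dotted strand. Restricting to $\A(\Zc,k)$ picks out the $k$-element subsets. Second, I would exhibit for each such subset $s$ the corresponding object of $H(\overline{\A(\Zc,k)})$ — the representable module $e_s \A(\Zc,k)$ — and verify that these generate the compact derived category. Third, and this is the technical heart, I would compute the relations in $K_0$ coming from distinguished triangles. The key input is the differential on $\A(\Zc)$: resolving a crossing between a solid strand and (the dotted strand at) an adjacent matched pair produces a relation expressing one representable in terms of two others, and this is precisely what Petkova analyzes. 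One organizes these into a ``Koszul-type'' resolution showing that, after passing to the homotopy category, the classes $[e_s]$ satisfy exactly the relations that make $K_0(H(\overline{\A(\Zc,k)}))$ free of rank $\binom{n}{k}$ on an appropriate sub-collection of the $s$. Fourth, I would fix the isomorphism $K_0(H(\overline{\A(\Zc,k)})) \xrightarrow{\sim} \wedge^k H_1(F;\Z)$ by sending a surviving generator $[e_s]$ to $v_{i_1} \wedge \cdots \wedge v_{i_k}$ where $s = \{i_1 < \cdots < i_k\}$ and $v_i$ is the homology class of the $i$-th arc, and check this respects the relations. Summing over $k$ gives the exterior algebra statement.

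\textbf{The main obstacle.} The delicate part is showing that the relations in $K_0$ are \emph{exactly} the exterior-algebra relations and no more — i.e. that there is no further collapse. Equivalently, one must produce enough distinguished triangles (from the differential and from mapping cones in $\overline{\A(\Zc,k)}$) to cut the free group on all subsets down to rank $\binom{n}{k}$, while simultaneously ruling out extra relations; the latter typically requires a dimension count or an explicit dg model whose homology is computed directly. Since \cite{PetkovaDecat} is cited as establishing precisely this, I would lean on her computation of $H_*(\A(\Zc,k))$ and of the relevant Ext-groups, using it to identify a set of $\binom{n}{k}$ representables whose classes are linearly independent in $K_0$ (e.g. via a pairing with traces or with the homology of the algebra) and then showing every other representable's class lies in their span. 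A secondary subtlety, flagged already in the excerpt's remarks, is the role of gradings: Petkova works with the $\Z$-grading and hence gets $K_0$ over $\Z$, so one should note that the $q$-grading on $NC_n$ used by Khovanov is not present here, but the arc-diagram grading on $\A(\Zc)$ does refine to give an honest $\Z$ (or even $\Z[q,q^{-1}]$) statement — this is why the proposition can be stated over $\Z$ rather than only over $\F_2$, in contrast to the rest of the paper.
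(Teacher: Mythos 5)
The paper offers no proof of this proposition --- it is stated as a direct citation of Petkova \cite{PetkovaDecat}, and the remarks that follow only record how the statement simplifies over $\F_2$ (where the homological grading can be dropped) and how Petkova's arguments generalize to arbitrary arc diagrams. So there is no in-paper argument to compare against, and what follows assesses your sketch on its own terms.

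Your outline has the right skeleton: idempotents of $\A(\Zc,k)$ correspond to $k$-element subsets of the $n$ matched pairs, the representables generate $H(\overline{\A(\Zc,k)})$ (this is in fact automatic from the definition of $\overline{A}$ as the closure of the representables under cones and isomorphisms), and the isomorphism is fixed by sending $[\Hom(e_s,-)]$ to the corresponding wedge of arc classes. The ``technical heart'' paragraph, however, inverts the emphasis and contains a miscount. For fixed $k$ there are already exactly $\binom{n}{k}$ idempotents, so there is no ``free group on all subsets'' to cut down to rank $\binom{n}{k}$, and no ``appropriate sub-collection of the $s$'' to identify. Likewise, $\wedge^k \Z^n$ is just $\Z^{\binom{n}{k}}$ as an abelian group, so there are no ``exterior-algebra relations'' to be found inside $K_0$; the anticommutativity relations live in the ring structure on $\wedge^*$, which is not part of the $K_0$ statement. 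The entire content of the proposition is that \emph{no} relations appear --- that the $\binom{n}{k}$ representable classes are $\Z$-linearly independent in $K_0$ --- which you acknowledge only as an afterthought. Finally, your closing remark on gradings is wrong: as the paper itself records, Petkova's $\Z$-coefficient version uses her absolute $\Z/2\Z$ homological grading on $\A(\Zc)$ (so that shift produces a sign in $K_0$), not a Maslov $\Z$-grading, and the intrinsic grading on $\A(\Zc)$ is by a nonabelian group $G(\Zc)$ that does not refine to a $\Z[q,q^{-1}]$-valued $K_0$; the present paper works over $\F_2$ precisely to sidestep this.
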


It follows that $K_0^{\F_2}(H(\overline{\A(\Zc)}))$ is isomorphic to $\wedge^* H_1(F;\F_2)$, and in the $\F_2$ setting we do not need to consider Petkova's absolute $\Z/2\Z$ homological grading on $\A(\Zc)$.

\begin{remark}
Petkova views the surface $F$ associated to a one-interval arc diagram $\Zc$ as being closed, while we view it as having $S^1$ boundary with one $S_+$ interval and one $S_-$ interval. Letting $\overline{F}$ denote the closed surface and $F$ denote the surface with boundary, we have natural identifications $H_1(\overline{F}) \cong H_1(F) \cong H_1(F,S_+)$ (with either $\Z$ or $\F_2$ coefficients).
\end{remark}

Petkova's arguments readily generalize to show that for general $\Zc$ as defined above, $K_0^{\F_2}(H(\overline{\A(\Zc)}))$ has an $\F_2$-basis given by the set of objects of $\A(\Zc)$ as a dg category, i.e. by the purely-horizontal strands pictures for $\Zc$. 

\begin{proposition}
If $(F,S_+,S_-,\Lambda)$ is the sutured surface represented by a general arc diagram $\Zc$, then the vector space $\wedge^* H_1(F,S_+; \F_2)$ has a basis in bijection with purely-horizontal strands pictures for $\Zc$.
\end{proposition}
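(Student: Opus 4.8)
The plan is to prove this by a direct topological computation, producing an explicit bijection rather than merely matching dimensions. Recall first the construction of the sutured surface $(F,S_+,S_-,\Lambda)$ from $\Zc = (\mathbf{Z},\mathbf{a},M)$ (Zarev's construction \cite{Zarev}, extended as above to allow circles and degeneracy): start from $\mathbf{Z}\times[0,1]$ --- a disjoint union of rectangles, one per interval component of $\mathbf{Z}$, and annuli, one per circle component --- and attach one two-dimensional $1$-handle (band) $B_i \cong [0,1]\times[0,1]$ for each matched pair $\{a_i,a_i'\}$ of $M$, gluing $\{0\}\times[0,1]$ and $\{1\}\times[0,1]$ to small neighborhoods of $(a_i,0)$ and $(a_i',0)$ in $\mathbf{Z}\times\{0\}$ (with orientations chosen so that $F$ is oriented). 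Here $S_+ = \mathbf{Z}\times\{1\}$, so that interval (resp.\ circle) components of $\mathbf{Z}$ correspond to interval (resp.\ circle) components of $S_+$ and the bands are attached disjointly from $S_+$; in particular the purely-horizontal strands pictures for $\Zc$ are exactly the subsets $T$ of the set of $n := |\mathbf{a}|/2$ arcs of $\Zc$ (for each arc in $T$ one places a matched pair of horizontal dotted strands).

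For each arc $\{a_i,a_i'\}$ let $\gamma_i \subset F$ be the properly embedded arc $(\{a_i\}\times[0,1]) \cup (\text{core of } B_i) \cup (\{a_i'\}\times[0,1])$, running from $(a_i,1)\in S_+$ to $(a_i',1)\in S_+$, which represents a class $[\gamma_i]\in H_1(F,S_+;\F_2)$. First I would check that $F$ deformation retracts onto the graph $\Gamma := (\mathbf{Z}\times\{1\}) \cup \bigcup_i \gamma_i$, carrying $S_+=\mathbf{Z}\times\{1\}$ to itself: retract each rectangle/annulus $\mathbf{Z}_j\times[0,1]$ onto $(\mathbf{Z}_j\times\{1\})\cup\bigcup(\{a\}\times[0,1])$ and each band $B_i$ onto its core, compatibly on the attaching regions. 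Thus $H_1(F,S_+;\F_2) \cong H_1(\Gamma,\mathbf{Z}\times\{1\};\F_2)$. Then I would equip $\Gamma$ with the CW structure whose $0$-cells are the $2n$ points $\mathbf{a}\times\{1\}$ together with $\partial\mathbf{Z}\times\{1\}$ and one extra point on each circle component of $\mathbf{Z}$ containing no point of $\mathbf{a}$, and whose $1$-cells are the arcs of $\mathbf{Z}\times\{1\}$ between consecutive $0$-cells together with the $n$ arcs $\gamma_i$. Since $\mathbf{Z}\times\{1\}$ is then a subcomplex containing every $0$-cell, the cellular chain complex of the pair is concentrated in degree $1$, where it equals $\F_2\langle\gamma_1,\ldots,\gamma_n\rangle$; hence $H_1(F,S_+;\F_2)\cong\F_2^n$ with basis $\{[\gamma_1],\ldots,[\gamma_n]\}$. (The components of $F$ arising from circle components of $\mathbf{Z}$ with no marked point contribute trivially to both sides and can be discarded; one could instead run the long exact sequence of $(F,S_+)$, but passing to the graph $\Gamma$ is cleaner since it kills dimension $2$ at once.)

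It follows that $\wedge^* H_1(F,S_+;\F_2) \cong \wedge^*(\F_2^n)$ has $\F_2$-basis $\{\bigwedge_{i\in T}[\gamma_i] : T\subseteq\{1,\ldots,n\}\}$, and the map sending a purely-horizontal strands picture, viewed as a subset $T$ of arcs, to $\bigwedge_{i\in T}[\gamma_i]$ is the desired bijection --- it even respects the grading, since a picture using $|T|=k$ arcs lies in $\A(\Zc,k)$ and is sent to a basis vector of $\wedge^k H_1(F,S_+;\F_2)$. The main obstacle is really the first paragraph: pinning down the construction of $F$ from $\Zc$ and, especially, the normalization $S_+ = \mathbf{Z}\times\{1\}$, on which everything rests (the bands must be attached disjointly from $S_+$). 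Once that is fixed, the homology computation is immediate, because the deformation retract $\Gamma$ removes dimension $2$ and placing every $0$-cell inside $S_+$ removes dimension $0$.
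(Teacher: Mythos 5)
Your proof is correct and takes essentially the same route as the paper's: both build $F$ from $\mathbf{Z}\times[0,1]$ with a band over each matched pair, identify $H_1(F,S_+;\F_2)\cong\F_2^n$ with basis the arcs $\gamma_i$ through the bands, and then pass to $\wedge^*$. The only minor difference is the tool used for the $H_1$ computation: the paper notes directly that $F/S_+$ is homotopy equivalent to a wedge of $n$ circles, whereas you deformation retract $F$ onto a graph $\Gamma$ carrying $S_+$ to itself and read off the relative cellular chain complex (also, your convention $S_+=\mathbf{Z}\times\{1\}$ with bands at $\mathbf{Z}\times\{0\}$ is the mirror image of the one written in the paper's proof; the two differ by flipping $[0,1]$ and are equivalent).
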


\begin{proof}
The construction of $F$ from $\Zc = (\mathbf{Z}, \mathbf{a}, \Lambda)$ starts by taking $\mathbf{Z} \times [0,1]$, a collection of rectangles and annuli, and gluing on some $2$-dimensional $1$-handles. For each pair of points $\{p,q\}$ of $\mathbf{a}$ matched by $M$, one glues on a $1$-handle with attaching zero-sphere $\{ (p,1), (q,1) \}$ compatibly with the orientation on $\mathbf{Z}$. The result is $F$; one sets $S_+ := \mathbf{Z} \times \{0\}$ and $\Lambda := (\partial \mathbf{Z}) \times \{0\}$, with the rest of the boundary of $F$ placed in $S_-$.

It follows that $F / S_+$ is homotopy equivalent to a wedge product of circles, one for each pair of points of $\mathbf{a}$, and these circles form a basis for $H_1(F,S_+; \F_2)$. A basis for $\wedge^* H_1(F,S_+;\F_2)$ is then given by all subsets of the set of these circles. For each such subset $X$, there is a corresponding purely-horizontal strands picture for $\Zc$; if a circle (corresponding to $\{p,q\}$ matched by $M$) is in $X$, one draws a pair of dotted horizontal strands at $p$ and $q$ in the strands picture. This correspondence is a bijection, proving the proposition.
\end{proof}

Let $K_0^{\F_2}(\A(\Zc)) := K_0^{\F_2}(H(\overline{\A(\Zc)}))$ and $K_0^{\F_2}(\A(\Zc,k)) := K_0^{\F_2}(H(\overline{\A(\Zc,k)}))$.

\begin{corollary}\label{cor:K0Homology}
We have natural identifications
\[
K_0^{\F_2}(\A(\Zc)) \cong \wedge^* H_1(F,S_+; \F_2) \quad \textrm{and} \quad K_0^{\F_2}(\A(\Zc,k)) \cong \wedge^k H_1(F,S_+; \F_2).
\]
\end{corollary}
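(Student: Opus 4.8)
The plan is to combine the two propositions just proved. The first (Petkova-style) proposition, together with its generalization stated right after it, gives an $\F_2$-linear isomorphism identifying $K_0^{\F_2}(\A(\Zc))$ with the free $\F_2$-vector space on the set of purely-horizontal strands pictures for $\Zc$: indeed, each object $e$ of $\A(\Zc)$ (viewed as a dg category) contributes a class $[\Hom(e,-)] = [e\A(\Zc)]$ in $K_0^{\F_2}$, these classes span by definition of $\overline{\A(\Zc)}$ and the distinguished triangles, and Petkova's argument shows they are linearly independent. The second proposition gives a bijection between purely-horizontal strands pictures for $\Zc$ and a distinguished basis of $\wedge^* H_1(F,S_+;\F_2)$ — namely the basis of subsets of the wedge-of-circles generators of $H_1(F,S_+;\F_2)$. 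Composing the first isomorphism with the $\F_2$-linear map sending each strands-picture basis vector to the corresponding wedge-of-circles basis vector yields the desired isomorphism $K_0^{\F_2}(\A(\Zc)) \cong \wedge^* H_1(F,S_+;\F_2)$.

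For the graded refinement, I would observe that this isomorphism is compatible with the decomposition by $k$: a purely-horizontal strands picture lying in $\A(\Zc,k)$ has exactly $2k$ dotted strands (no solid strands, $k$ matched pairs), so it contributes a class in $K_0^{\F_2}(\A(\Zc,k))$, and the subset of circles it corresponds to has size $k$, hence lands in $\wedge^k H_1(F,S_+;\F_2)$. Since $\A(\Zc) = \bigoplus_{k} \A(\Zc,k)$ as dg algebras (ignoring unit) and $\wedge^* = \bigoplus_k \wedge^k$, restricting the isomorphism above to the degree-$k$ part gives $K_0^{\F_2}(\A(\Zc,k)) \cong \wedge^k H_1(F,S_+;\F_2)$, establishing the second identification and showing it refines the first.

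Finally, I would address naturality. The point is that each ingredient is canonical once the arc diagram $\Zc$ is fixed: the generators of $H_1(F,S_+;\F_2)$ coming from the deformation retraction of $F/S_+$ onto a wedge of circles are in canonical bijection with the matched pairs of $M$ (one circle per $1$-handle), and the idempotent objects of $\A(\Zc)$ are likewise in canonical bijection with subsets of matched pairs via which dotted strands are present. So the construction does not involve arbitrary choices; I would phrase this as the statement that the isomorphism is induced by the evident bijection on distinguished bases and hence is ``natural'' in the sense of depending only on $\Zc$ (and, via the construction of $F$ from $\Zc$, only on the combinatorics of the matching).

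The main obstacle, such as it is, is really bookkeeping rather than a genuine difficulty: one must make sure that the grading-by-$k$ conventions match up (solid strands count $1$, dotted pairs count $1$ on the algebra side; cardinality of the circle-subset on the homology side), and that the generalization of Petkova's linear-independence argument to arbitrary $\Zc$ — with circle components of $\mathbf Z$ allowed and no non-degeneracy hypothesis — genuinely goes through. The excerpt asserts this generalization (``Petkova's arguments readily generalize''), so for the purposes of this corollary I would simply cite it; the only thing left is to record that the two bijective correspondences compose to the claimed natural isomorphism, which is immediate.
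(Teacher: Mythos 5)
Your proposal is correct and matches the paper's (implicit) reasoning: the corollary is stated without explicit proof precisely because it follows at once from the two preceding propositions, which exhibit bases of $K_0^{\F_2}(\A(\Zc))$ and of $\wedge^* H_1(F,S_+;\F_2)$ indexed by the same combinatorial set, namely purely-horizontal strands pictures. Your bookkeeping of the $k$-grading (matched dotted pairs on the algebra side, cardinality of the circle-subset on the homology side) and your reading of ``natural'' as depending only on the combinatorics of $\Zc$ are both the intended ones.
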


\section{Actions on exterior powers of homology}\label{sec:HomologyActions}

Let $\Zc = (\mathbf{Z}, \mathbf{a}, M)$ be an arc diagram representing a sutured surface $(F,S_+,S_-,\Lambda)$ as in Figure~\ref{fig:SurfaceExample}, and let $I$ be an interval component of $S_+$ (equivalently, let $I$ be an interval component of $\mathbf{Z}$). The endomorphism $\Phi_I$ of $\wedge^* H_1(F,S_+;\F_2)$ defined in the introduction squares to zero and thus gives us an action of $\F_2[E]/(E^2)$ on $\wedge^* H_1(F,S_+;\F_2)$ in which $E$ acts by $\Phi_I$. In this section we identify this action with the action of $K_0^{\F_2}(\Uc)$ on $K_0^{\F_2}(\A(\Zc))$ coming from the 2-action of $\Uc$ on $\A(\Zc)$ described in Section~\ref{sec:HigherActions}.

\begin{figure}
    \centering
    \includegraphics[scale=0.7]{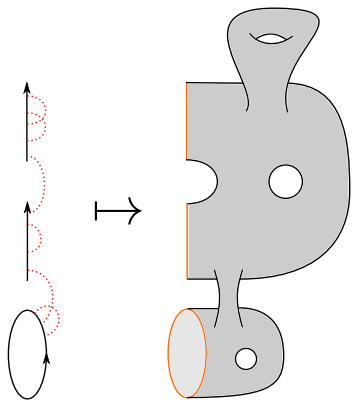}
    \caption{An arc diagram and the sutured surface it represents. The $S_+$ portion of the surface boundary is drawn in orange and the $S_-$ portion is drawn in black.}
    \label{fig:SurfaceExample}
\end{figure}

\begin{figure}
    \centering
    \includegraphics[scale=0.7]{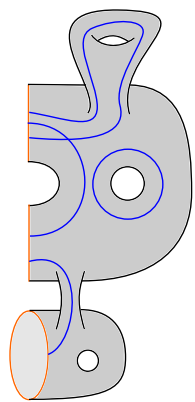}
    \caption{Depiction of a pure wedge-product element of $\wedge^* H_1(F,S_+;\F_2)$.}
    \label{fig:WedgeProduct}
\end{figure}

\begin{remark}
For an element $\omega$ of $\wedge^* H_1(F,S_+;\F_2)$ that is a pure wedge product of arcs in $F$ with boundary on $S_+$ and/or circles in $F$, we can depict $\omega$ by drawing all the arcs and circles of $\omega$ in a picture of $F$. See Figure~\ref{fig:WedgeProduct} for an example. The element $E$ of $\F_2[E]/(E^2)$ acts on this depiction of $\omega$ by summing over all ways of removing one arc incident with the component $I$ of $S_+$; see Figure~\ref{fig:EHomologyAction}. An arc with both endpoints on $I$ is ``removed twice'' which, in the sum with $\F_2$ coefficients, amounts to not being removed at all; indeed, such an arc represents the same homology class as a circle with no endpoints.
\end{remark}

\begin{figure}
    \centering
    \includegraphics[scale=0.7]{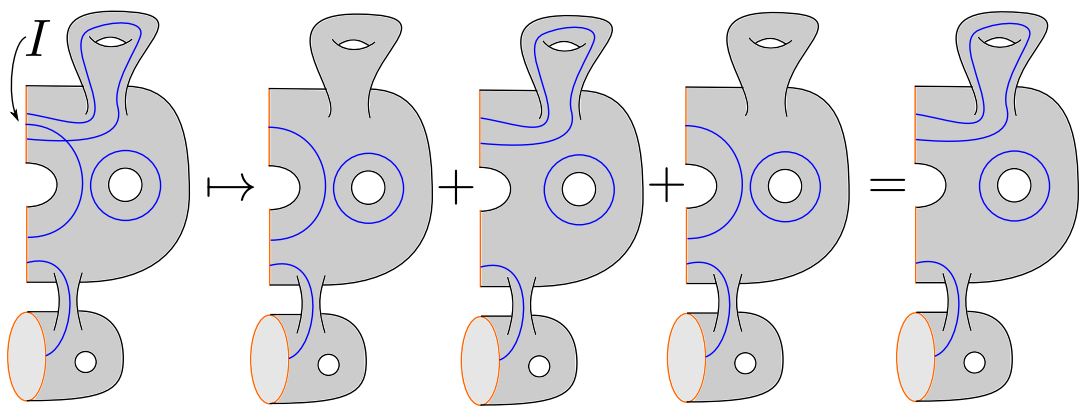}
    \caption{Action of $E \in \F_2[E]/(E^2)$ on $\omega \in \wedge^* H_1(F,S_+; \F_2)$ given a distinguished interval $I$ of $S_+$.}
    \label{fig:EHomologyAction}
\end{figure}

We first review an important structural property of the bimodule $\Ec$ from Section~\ref{sec:HigherActions}; the below proposition follows from \cite[Section 8.1.4]{ManionRouquier}, but to keep this paper self-contained we include an independent proof below. 

\begin{proposition}\label{prop:DA}
As a left differential module over the differential category $\A(\Zc)$, $\Ec$ is an object of $\overline{\A(\Zc)}$.
\end{proposition}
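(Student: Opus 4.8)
The plan is to show that $\Ec$, viewed as a left differential $\A(\Zc)$-module, is built from representable modules $\Hom(e,-) = \A(\Zc)e$ (i.e. projectives $e'\A(\Zc)e$ for horizontal idempotents $e$) by iterated mapping cones. Equivalently, in the type-$D$-structure language of the remark after the definition of $\overline{A}$, I want to exhibit $\Ec$ as a finitely generated bounded type $D$ structure over $\A(\Zc)$, which here just means: find a filtration of $\Ec$ by sub-bimodules (or at least sub-left-modules) whose successive quotients are free/projective left $\A(\Zc)$-modules, with the differential strictly decreasing the filtration level. Concretely, I would filter the $\F_2$-basis of $\Ec$ by strands pictures according to some statistic and check that the part of the differential and the left $\A(\Zc)$-action that preserves the associated graded makes each graded piece a direct sum of representables $e'\A(\Zc)$.

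**First step: choose the filtration.** The natural candidate comes from looking at the distinguished strand — the one with left endpoint at $(1/2,P)$. For a basis element $x$ of $\Ec$, record the position (in $\{1\}\times\mathbf a$) of the right endpoint of the distinguished strand, or more refined data about how that strand is "routed" relative to the others. A cleaner alternative: observe that $\Ec$ is already decomposed as $\bigoplus_k \Ec(k)$, and work one $k$ at a time; then filter $\Ec(k)$ by, say, the number of crossings the distinguished strand makes, or by the $\rho$-idempotent together with where the distinguished strand lands. I would set up the filtration so that resolving a crossing and multiplying on the left by basis elements of $\A(\Zc)$ either stays at the current level or drops it, and so that the top-level quotient is spanned by those $x$ in which the distinguished strand is as "short/straight" as possible.

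**Second step: identify the associated graded.** For each filtration level, the associated-graded left module should be identified with $\bigoplus_j e_j' \A(\Zc)$ for suitable horizontal idempotents $e_j'$ — I expect the generators to be exactly the strands pictures in which the distinguished strand is a single taut solid strand with no crossings and nothing to its left to interact with, and the claim is that acting by $\A(\Zc)$ on the left freely generates the rest of that graded piece from these. This is where I would invoke (or re-prove in this language) \cite[Lemma 8.1.2]{ManionRouquier} about $\Ec^{\otimes m}$, and the idempotent bookkeeping $x = \lambda(x) x \rho(x)$ recalled just before Section~\ref{sec:HigherActions}. Then $\overline{\A(\Zc)}$ being closed under cones and containing all representables $\Hom(e,-)$ gives the conclusion by induction on the (finite) number of filtration levels.

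**The main obstacle** will be the second step: verifying that the associated graded is genuinely \emph{free} (a direct sum of representables) rather than merely filtered again, i.e. that no further relations among basis elements are imposed by the vanishing conventions (double crossings, unmatched dotted strands, erasure rules) at the top graded level. In other words, I must choose the filtration statistic carefully enough that at each level the surviving left-action terms assemble the basis of that graded piece into orbits each isomorphic to the full basis of some $e'\A(\Zc)$, with no collisions or forced zeros. Handling the dotted strands and the solid–dotted concatenation/erasure rule is the fiddly part; I would likely first do the case with no dotted strands (pure interval arc diagram) to fix the argument, then incorporate the matching $M$ and the associated idempotent subtleties. Once the right statistic is pinned down, the rest is routine diagram-chasing.
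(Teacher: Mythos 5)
Your proposal takes essentially the same approach as the paper's proof: exhibit $\Ec$ as an iterated mapping cone of representables by focusing on the distinguished strand and finding a statistic that strictly drops under the relevant part of the differential. Two specifics worth flagging where your vagueness could cause trouble.

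First, the paper does not filter $\Ec$ directly; it first proves the underlying left module (forgetting the differential) is already free, by a unique-factorization argument: every basis element $x$ of $\Ec$ factors uniquely as $x = ay$ with $a\in\A(\Zc)$ and $y$ in the set $S$ of pictures whose \emph{only} moving strand is the distinguished one with left endpoint $(1/2,P)$ (obtained by stretching so all ordinary strands move only near the left edge, then cutting). This gives $\Ec \cong \bigoplus_{y\in S}\Hom_{\A(\Zc)}(\lambda(y),-)$ as left modules outright, so there is nothing to check about the associated graded being ``genuinely free'' --- your main worry evaporates. Note also that elements of $S$ are allowed to have crossings between the distinguished strand and horizontal dotted strands; your characterization of the generators as having ``no crossings'' is too restrictive.

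Second, one of the statistics you float --- the number of crossings the distinguished strand makes, as a filtration on $\Ec$ itself --- would not behave well, because left multiplication by $\A(\Zc)$ can \emph{increase} that count. The paper avoids this by putting the grading on the generator set $S$ rather than on $\Ec$: define $\deg(y)$ for $y\in S$ to be the number of points of $\mathbf{a}$ the distinguished strand passes on its minimal path from $P$ to its right endpoint. Since the differential of $y\in S$ only resolves crossings of the distinguished strand with horizontal strands strictly below it, every term $ay'$ in $d(y)$ has $\deg(y')<\deg(y)$, and the left action trivially preserves each summand $\Hom(\lambda(y),-)$. Ordering $S$ by increasing degree then realizes $\Ec$ as an iterated cone, finishing the proof. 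If you pin down $S$ and this degree, your outline becomes the paper's argument.
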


\begin{proof}
We first show that as a left module (disregarding the differential), $\Ec$ is isomorphic to a direct sum of modules of the form $\Hom(e,-)$ for objects $e$ of $\A(\Zc)$. Indeed, consider the subset $S$ of strands pictures for $\Ec$ (i.e. $\F_2$-basis elements of $\Ec$) such that the only moving strand is the one with left endpoint at $(1/2,P)$ in the language of Section~\ref{sec:HigherActions}. See Figure~\ref{fig:SElement} for an example of an element of $S$. An arbitrary basis element $x$ of $\Ec$ can be written as $a y$ for unique basis elements $a \in \A(\Zc)$ and $y \in S$; indeed, after a homotopy relative to the endpoints, we can draw $x$ such that all strands of $x$ except the one with endpoint at $(1/2,P)$ only move on $\mathbf{Z} \times [0,\varepsilon]$ for some $\varepsilon < 1/2$, and are horizontal on $\mathbf{Z} \times [\varepsilon,1]$ (see Figure~\ref{fig:StretchingStrandsPicture}).

Cutting the diagram for $x$ at $\mathbf{Z} \times \{\varepsilon\}$, we see a strands picture for a basis element $a \in \A(\Zc)$ on the left. On the right side of the cut, let $y$ be the element of $S$ obtained by making all the horizontal strands dotted and adding in their matching horizontal strands (according to the matching $M$). See Figure~\ref{fig:EStrandsFactorization} for an example. We have $a y = x$; furthermore, for any $y \in S$ with left idempotent $\lambda(y)$, and any basis element $a$ of $\Hom_{\A(\Zc)}(\lambda(y),-)$, we have that $a y$ is a basis element for $\Ec$ and that $a$ and $y$ are recovered when splitting $a y$ as above.

We have defined a bijection between our basis for $\Ec$ and the set of pairs $(a,y)$ where $y$ is an element of $S$ with left idempotent $\lambda(y)$ and $a$ is a basis element of $\Hom_{\A(\Zc)}(\lambda(y),-)$. Thus, we have an identification of $\Ec$ with $\bigoplus_{y \in S} \Hom_{\A(\Zc)}(\lambda(y),-)$ as vector spaces. This identification respects left multiplication by $\A(\Zc)$, so
\[
\Ec \cong \bigoplus_{y \in S} \Hom_{\A(\Zc)}(\lambda(y),-)
\]
as left modules over $\A(\Zc)$ (ignoring the differential).

Now, we can define a grading on the elements of $S$: say $y \in S$ has degree $d$ if the moving strand $\sigma$ of $y$ with left endpoint $(1/2,P)$ encounters $d$ points of $\mathbf{a}$ while traveling along a minimal path in $\mathbf{Z}$ from $P$ to its right endpoint. Order the elements of $S$ by increasing degree (choose any ordering of the elements of $S$ in each given degree). Because the differential on $\Ec$, applied to $y \in S$, will only resolve crossings between the special strand $\sigma$ of $y$ and horizontal strands strictly below $\sigma$, the only nonzero terms of this differential will be of the form $a y'$ for $y'$ of degree strictly less than that of $y$ (and thus $y'$ that appear before $y$ in the ordering on $S$). It follows that $\Ec$ is isomorphic to an iterated mapping cone built from $\Hom_{\A(\Zc)}(\lambda(y),-)$ for $y \in S$, so we have $\Ec \in \overline{\A(\Zc)}$.
\end{proof}

\begin{figure}
    \centering
    \includegraphics[scale=0.7]{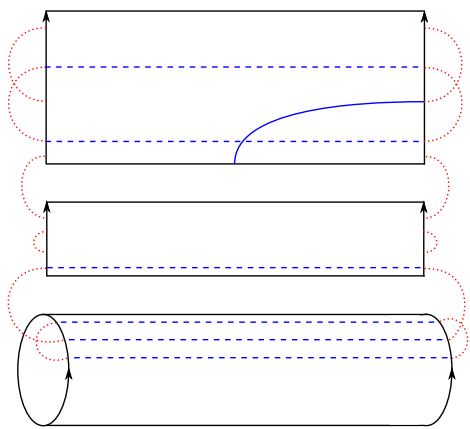}
    \caption{An element of the set $S$ of special basis elements of $\Ec$.}
    \label{fig:SElement}
\end{figure}

\begin{figure}
    \centering
    \includegraphics[scale=0.7]{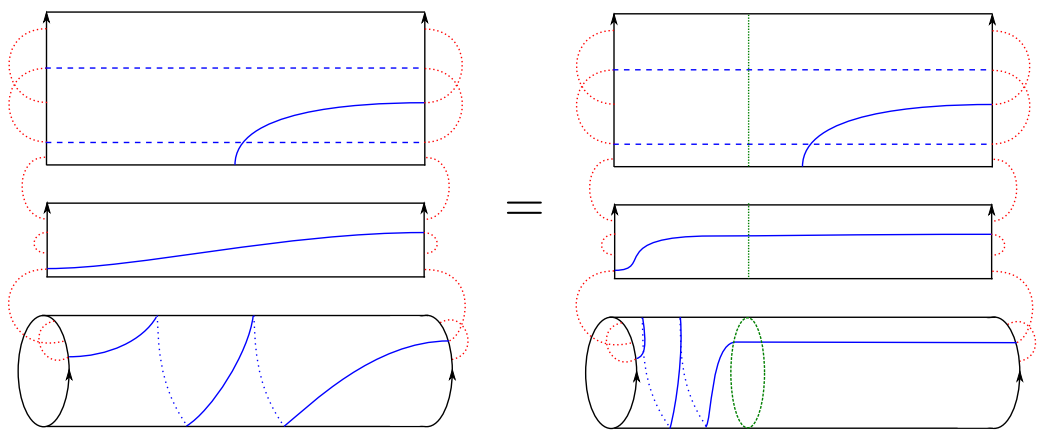}
    \caption{Stretching the basis element $x$ of Figure~\ref{fig:EStrandsPicture} so that all ``ordinary'' moving strands only move on $\mathbf{Z} \times [0,\varepsilon]$; the green dashed lines on the right indicate where we will cut to factor $x$ as $ay$ with $a \in \A(\Zc), y \in S$.}
    \label{fig:StretchingStrandsPicture}
\end{figure}

\begin{figure}
    \centering
    \includegraphics[scale=0.7]{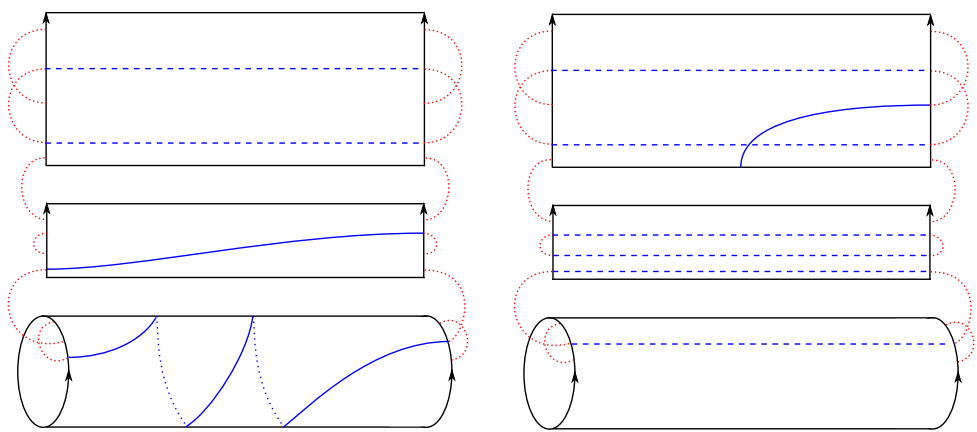}
    \caption{Factorizing the basis element $x$ of Figure~\ref{fig:EStrandsPicture} as $a \in \A(\Zc)$ (left) times $y \in S$ (right).}
    \label{fig:EStrandsFactorization}
\end{figure}

\begin{remark}
In the language of bordered Heegaard Floer homology, Proposition~\ref{prop:DA} says that $\Ec$ is the differential bimodule associated to a finitely generated left bounded type $DA$ bimodule over $\A(\Zc)$ with $\delta^1_i$ zero for $i > 2$.
\end{remark}

Proposition~\ref{prop:DA} implies the following corollary.

\begin{corollary}
We have a differential functor $\Ec \otimes_{\A(\Zc)} -$ from $\overline{\A(\Zc)}$ to itself, and thus a functor $\Ec \otimes_{\A(\Zc)} -$ from $H(\overline{\A(\Zc)})$ to itself.
\end{corollary}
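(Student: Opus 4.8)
The plan is to show that $\Ec\otimes_{\A(\Zc)}-$ carries each representable module $\Hom_{\A(\Zc)}(e,-)$ into $\overline{\A(\Zc)}$ and then to propagate this through the inductive construction of $\overline{\A(\Zc)}$. First I would note that, since $\Ec$ is a differential $\A(\Zc)$-bimodule, $\Ec\otimes_{\A(\Zc)}-$ is automatically a differential endofunctor of $\Modl{\A(\Zc)}$: it is $\F_2$-linear and additive, sends closed morphisms to closed morphisms, and sends null-homotopic morphisms to null-homotopic ones. It also commutes, up to canonical natural isomorphism, with the formation of mapping cones, because $\Cone(f)$ for a closed $f\colon X\to Y$ has underlying module $X\oplus Y$ with a triangular differential assembled from $d_X$, $d_Y$ and $f$, and tensoring with $\Ec$ is additive; it trivially commutes with isomorphisms. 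Hence the full subcategory of $\Modl{\A(\Zc)}$ on objects $M$ with $\Ec\otimes_{\A(\Zc)}M\in\overline{\A(\Zc)}$ is closed under isomorphisms and under mapping cones of closed morphisms, so once this subcategory is known to contain every representable, minimality of $\overline{\A(\Zc)}$ forces it to contain all of $\overline{\A(\Zc)}$. This yields the differential endofunctor of $\overline{\A(\Zc)}$; the induced endofunctor of $H(\overline{\A(\Zc)})$ is then automatic, since any differential functor preserves the chain-homotopy relation on morphisms and hence descends to homotopy categories.

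The remaining step, and the one I expect to take real work, is to check that $\Ec\otimes_{\A(\Zc)}\Hom_{\A(\Zc)}(e,-)\in\overline{\A(\Zc)}$ for each object $e$ of $\A(\Zc)$. Here $\Hom_{\A(\Zc)}(e,-)=\A(\Zc)e$ and $\Ec\otimes_{\A(\Zc)}\A(\Zc)e\cong\Ec e$ as differential left $\A(\Zc)$-modules, where $\Ec e$ denotes the submodule spanned by the basis elements $x$ of $\Ec$ with $\rho(x)=e$. To see $\Ec e\in\overline{\A(\Zc)}$ I would revisit the proof of Proposition~\ref{prop:DA}, which already realizes $\Ec$ as an iterated mapping cone of the modules $\Hom_{\A(\Zc)}(\lambda(y),-)$ for $y\in S$, ordered by increasing degree. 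Since the differential on $\Ec$ and the left $\A(\Zc)$-action both preserve right idempotents, this construction should split along the finite direct-sum decomposition $\Ec=\bigoplus_e\Ec e$ over the objects $e$: restricting the degree-filtration argument to those $y\in S$ with $\rho(y)=e$ should realize $\Ec e$ itself as an iterated mapping cone of representables, whence $\Ec e\in\overline{\A(\Zc)}$. (As an alternative, one could instead deduce the claim from the type $DA$ description of $\Ec$ in the remark following Proposition~\ref{prop:DA}.)

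The main obstacle is exactly this last point: one cannot simply say $\Ec e$ is a retract of $\Ec\in\overline{\A(\Zc)}$, since $\overline{\A(\Zc)}$ need not be closed under retracts; instead one must verify that the mapping-cone filtration of Proposition~\ref{prop:DA} is compatible with the splitting by right idempotents, so that each summand $\Ec e$ is genuinely assembled as a cone of representables. Everything else is formal manipulation of differential modules over $\F_2$.
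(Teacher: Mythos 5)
Your argument is correct, and it fills in details that the paper leaves implicit: the paper simply asserts that the corollary follows from Proposition~\ref{prop:DA}, with the preceding remark indicating that the intended mechanism is the standard bordered-Floer fact that a (finitely generated, bounded, left) type $DA$ bimodule, box-tensored with a finitely generated bounded type $D$ structure, yields another such type $D$ structure. Your proof unpacks exactly what makes that work in this setting. In particular you correctly reduce to showing $\Ec\otimes_{\A(\Zc)}\Hom_{\A(\Zc)}(e,-)\cong\Ec e\in\overline{\A(\Zc)}$, and you correctly flag the one nontrivial point: since $\overline{\A(\Zc)}$ is only required to be closed under cones and isomorphisms, one cannot pass to the direct summand $\Ec e$ of $\Ec$ for free. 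Your fix is the right one — the decomposition $\Ec\cong\bigoplus_{y\in S}\Hom_{\A(\Zc)}(\lambda(y),-)$ and the degree filtration from the proof of Proposition~\ref{prop:DA} both preserve right idempotents (the differential resolves a crossing in a strands picture without changing the right-hand endpoints, hence fixes $\rho$), so the filtration restricts to exhibit each $\Ec e=\bigoplus_{y\in S,\,\rho(y)=e}\Hom_{\A(\Zc)}(\lambda(y),-)$ as an iterated cone of representables. The rest of your proof (additivity of $\Ec\otimes_{\A(\Zc)}-$, compatibility with mapping cones via the triangular differential, descent to the homotopy category) is routine and correct. The net effect is that you have made explicit, at the level of filtrations compatible with right idempotents, the content that the paper delegates to the type $DA$ formalism; this is a slightly more elementary packaging of the same argument rather than a genuinely different one, but the care about the retract subtlety is a worthwhile addition.
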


The differential functor $\Ec \otimes_{\A(\Zc)} -$ sends mapping cones to mapping cones, so the corresponding functor on homotopy categories sends distinguished triangles to distinguished triangles and thus induces an endomorphism $[\Ec \otimes_{\A(\Zc)} -]$ of $K_0^{\F_2}(\A(\Zc))$.

\begin{theorem}\label{thm:MainDecatResult}
Let $\Zc = (\mathbf{Z},\mathbf{a},M)$ be an arc diagram and let $(F,S_+,S_-,\Lambda)$ be the sutured surface represented by $\Zc$. Let $I$ be an interval component of $S_+$, or equivalently an interval component of $\mathbf{Z}$. Under the identification $K_0^{\F_2}(\A(\Zc)) \cong \wedge^* H_1(F,S_+;\F_2)$ from Corollary~\ref{cor:K0Homology}, the endomorphism $[\Ec \otimes_{\A(\Zc)} -]$ of $K_0^{\F_2}(\A(\Zc))$ agrees with the endomorphism $\Phi_I$ of $\wedge^* H_1(F,S_+;\F_2)$ from the introduction. More specifically, the map $[\Ec(k) \otimes_{\A(\Zc,k)} -]$ from $K_0^{\F_2}(\A(\Zc),k)$ to $K_0^{\F_2}(\A(\Zc),k-1)$ agrees with $\Phi_I$ as a map from $\wedge^k H_1(F,S_+;\F_2)$ to $\wedge^{k-1} H_1(F,S_+;\F_2)$.
\end{theorem}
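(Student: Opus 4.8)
The plan is to evaluate the endomorphism $[\Ec \otimes_{\A(\Zc)} -]$ directly on the distinguished $\F_2$-basis of $K_0^{\F_2}(\A(\Zc))$ and to match the answer with the combinatorial formula for $\Phi_I$. By Section~\ref{sec:Decategorification}, this basis consists of the classes $[\Hom_{\A(\Zc)}(e_X,-)]$ of the representable left modules attached to the purely-horizontal strands pictures $e_X$ of $\Zc$, indexed by subsets $X$ of the set of $M$-matched pairs of $\mathbf{a}$; under Corollary~\ref{cor:K0Homology} this class corresponds to the pure wedge $\bigwedge_{c \in X} c$ of the classes $c \in H_1(F,S_+;\F_2)$ associated with the pairs in $X$, and $|X| = k$ corresponds to $\wedge^k H_1(F,S_+;\F_2)$. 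Since $\Ec \otimes_{\A(\Zc)} -$ carries mapping cones to mapping cones, $[\Ec \otimes_{\A(\Zc)} -]$ is determined by its values on these classes, and since $\Hom_{\A(\Zc)}(e_X,-)$ is representable we have $\Ec \otimes_{\A(\Zc)} \Hom_{\A(\Zc)}(e_X,-) \cong \Ec e_X$, the sub-differential-module of $\Ec$ spanned by the basis elements with right idempotent $e_X$.

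Next I would compute $[\Ec e_X]$ using the decomposition established in the proof of Proposition~\ref{prop:DA}. There, each basis element $x$ of $\Ec$ factors uniquely as $x = ay$ with $a$ a basis element of $\A(\Zc)$ and $y$ in the set $S$ of special basis elements; since concatenation on the right does not alter right endpoints we have $\rho(x) = \rho(y)$, and since resolving crossings preserves right idempotents the iterated mapping cone structure of Proposition~\ref{prop:DA} restricts to $\Ec e_X$. Hence $[\Ec e_X] = \sum_{y \in S,\, \rho(y) = e_X} [\Hom_{\A(\Zc)}(\lambda(y),-)]$ in $K_0^{\F_2}$. I would then identify this index set diagrammatically: a $y \in S$ has a single moving strand $\sigma$ running from $(1/2,P)$ to a point $a'$ of $\mathbf{a}$ on the interval $I$, together with horizontal dotted strands recording a subset $X'$ with $\rho(y) = e_{X' \cup \{c\}}$, where $c$ is the matched pair containing $a'$; conversely, each pair $c \in X$ with an endpoint $a'$ on $I$ produces such a $y$ with $\lambda(y) = e_{X \setminus \{c\}}$. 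Therefore $[\Ec e_X] = \sum_c m_c\, [\Hom_{\A(\Zc)}(e_{X\setminus\{c\}},-)]$, summed over $c \in X$ with at least one endpoint on $I$, where $m_c$ is the number of endpoints of $c$ lying on $I$. Over $\F_2$ the pairs $c$ with two endpoints on $I$ contribute nothing, so, translating via Corollary~\ref{cor:K0Homology},
\[
[\Ec \otimes_{\A(\Zc)} -]\Big(\bigwedge_{c \in X} c\Big) = \sum_{\substack{c \in X \\ c \text{ has exactly one endpoint on } I}} \bigwedge_{c' \in X \setminus \{c\}} c'.
\]

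Finally I would match this with $\Phi_I$. By the construction in the introduction, $\Phi_I\big(\bigwedge_{c\in X} c\big) = \sum_{c\in X} \phi_I(c)\, \bigwedge_{c'\in X\setminus\{c\}} c'$, so it remains to check that $\phi_I(c) = 1$ precisely when the pair $c$ has exactly one endpoint on $I$. For this I would invoke the explicit model of $F$ and the classes $c$ from the proof of the basis proposition in Section~\ref{sec:Decategorification}: the pair $c = \{p,q\}$ corresponds to an arc in $F$ with endpoints $(p,0),(q,0) \in S_+$, so $\partial c \in H_0(S_+;\F_2)$ is the sum of the classes of the components of $S_+$ containing $(p,0)$ and $(q,0)$, and pairing with the cohomology class of the interval $I$ counts modulo $2$ how many of $p,q$ lie on $I$ — that is, $m_c \bmod 2$. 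This agrees with the previous display and establishes the main identity; the refined statement for $[\Ec(k) \otimes_{\A(\Zc,k)} -]\colon \wedge^k H_1(F,S_+;\F_2) \to \wedge^{k-1} H_1(F,S_+;\F_2)$ is then automatic, since $\Ec$ shifts the strand-count grading by $1$ and $|X| = k$ forces $\lambda(y)$ to record $k-1$ dotted pairs. I expect the main obstacle to be the right-idempotent-refined version of the Proposition~\ref{prop:DA} decomposition together with the diagrammatic bijection between $\{y \in S : \rho(y) = e_X\}$ and the endpoints-on-$I$ of pairs in $X$; this is where the strands pictures and the $\F_2$-cancellation of ``doubled'' pairs must be handled with care.
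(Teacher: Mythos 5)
Your proposal is correct and follows essentially the same route as the paper's proof: identify $\Ec \otimes_{\A(\Zc)} \Hom(e,-)$ with $\Ec e$, use the filtration of $\Ec$ by the set $S$ from Proposition~\ref{prop:DA} to write $[\Ec e] = \sum_{y\in S,\,\rho(y)=e}[\Hom(\lambda(y),-)]$, enumerate the $y\in S$ with $\rho(y)=e$ by endpoints on $I$ of the matched pairs recorded by $e$, invoke $\F_2$-cancellation for pairs with both endpoints on $I$, and match with the combinatorics of $\Phi_I$. The only difference is that you spell out a couple of steps the paper leaves implicit (that crossing resolutions preserve right idempotents so the filtration restricts to $\Ec e_X$, and the explicit evaluation of $\phi_I$), which is a welcome but not substantively different elaboration.
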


\begin{proof}
Let $e$ be an object of $\A(\Zc)$ (viewed as a differential category); we have a corresponding basis element $[\Hom(e,-)]$ of $K_0^{\F_2}(\A(\Zc))$. Applying $[\Ec \otimes_{\A(\Zc)} -]$ to $[\Hom(e,-)]$, we get $\sum_{y \in S, \, \rho(y) = e} [\Hom(\lambda(y), -)]$. Viewing $e$ as a purely horizontal strands picture and defining $S$ as in the proof of Proposition~\ref{prop:DA}, there is one element $y_s \in S$ with $\rho(y_s) = e$ for each strand $s$ of $e$ with endpoints in the interval $I$, and these are all the elements $y \in S$ with $\rho(y) = e$. For each such strand $s$ (say with endpoints at $Q \in I$), the element $y_s$ has a moving strand between $(1/2,P)$ and $(1,Q)$, and has the same horizontal strands as $e$ except for $s$ and its partner $s'$ under the matching. Thus, $\lambda(y_s)$ is $e$ with the strands $s$ and $s'$ removed. 

It follows that $[\Ec \otimes_{\A(\Zc)} -] ([\Hom(e,-)])$ is the sum of $[\Hom(e',-)]$ over all $e'$ obtained from $e$ by choosing one strand $s$ in $[0,1] \times I$ and removing both $s$ and its partner $s'$. In particular, for strands $s$ in $[0,1] \times I$ such that $s'$ is also in $[0,1] \times I$, the pair of strands $(s,s')$ is removed from $e$ twice, and since we are working over $\F_2$, removals of these strands contribute zero to $[\Ec \otimes_{\A(\Zc)} -] ([\Hom(e,-)])$.

Now let $\omega$ be the element of $\wedge^* H_1(F,S_+;\F_2)$ corresponding to $[\Hom(e,-)]$ under the isomorphism of Corollary~\ref{cor:K0Homology}. Concretely, each pair of matched strands $\{s,s'\}$ of $e$ gives a basis element of $H_1(F,S_+;\F_2)$, and $\omega$ is the wedge product of these elements over all such pairs $\{s,s'\}$. When we apply $\Phi_I$ to $\omega$, we sum over all ways to remove a factor from this wedge product if the factor maps to $1 \in \F_2$ under the map $\phi_I$ from the introduction. Such factors are those corresponding to pairs of strands $\{s,s'\}$ of $e$ in which one of $\{s,s'\}$, but not both, is in $[0,1] \times I$. It follows that $\Phi_I(\omega)$ corresponds to $[\Ec \otimes_{\A(\Zc)} -] ([\Hom(e,-)])$ as desired.
\end{proof}

\section{Gluing and TQFT}\label{sec:TQFT}

In this section, we prove (a slightly more general version of) Theorem~\ref{thm:IntroTQFT} from the introduction. Let $(F,S_+,S_-,\Lambda)$ be a sutured surface and suppose that $I_1 \neq I_2$ are interval components of $S_+$. Up to homeomorphism, there is a unique way to glue $I_1$ to $I_2$ and get an oriented surface $\overline{F}$. There are naturally defined subsets $\overline{S_+}$ and $\overline{S_-}$ of the boundary of $\overline{F}$, intersecting in a set of points $\overline{\Lambda}$ (which is $\Lambda$ with the endpoints of $I_1$ and $I_2$ removed).

\begin{figure}
    \centering
    \includegraphics[scale=0.7]{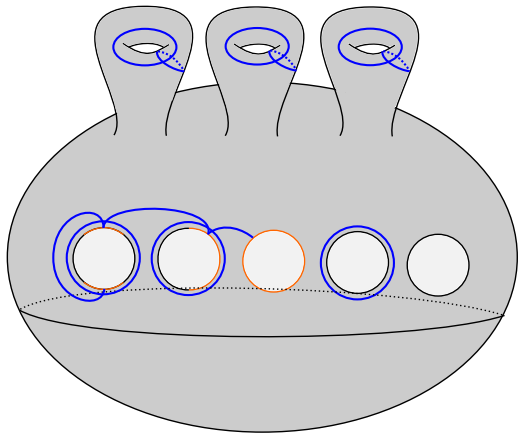}
    \caption{A standard model for a sutured surface, given by a sphere with some number of tori connect-summed on, as well as some number of disks removed and some even number of sutures on each boundary component. The $S_+$ boundary is drawn in orange and the $S_-$ boundary is drawn in black. The set of blue arcs and circles gives a basis for $H_1(F,S_+;\F_2)$.}
    \label{fig:StandardSuturedSurface}
\end{figure}

\begin{lemma}\label{lem:MainGluing}
We have an isomorphism
\[
\wedge^* H_1(\overline{F}, \overline{S_+}; \F_2) \cong \left( \wedge^* H_1(F,S_+;\F_2) \right) \otimes_{\left( \frac{\F_2[E]}{(E^2)} \right)^{\otimes 2}} \frac{\F_2[E]}{(E^2)},
\]
where the action of $\left( \F_2[E]/(E^2) \right)^{\otimes 2}$ on $\wedge^* H_1(F,S_+;\F_2)$ comes from the $\F_2[E]/(E^2)$ actions associated to $I_1$ and $I_2$, and the action of $\left( \F_2[E]/(E^2) \right)^{\otimes 2}$ on $\F_2[E]/(E^2)$ comes from multiplication. We can choose the isomorphism so that it intertwines the remaining actions of $\F_2[E]/(E^2)$ from $S_+$ intervals other than $I_1$ or $I_2$.
\end{lemma}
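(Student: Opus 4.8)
The plan is to reduce the statement to a concrete computation with the basis of $\wedge^* H_1$ coming from arcs and circles, exactly as in Figure~\ref{fig:StandardSuturedSurface}, and to identify both sides of the claimed isomorphism with explicit vector spaces. First I would choose a basis for $H_1(F,S_+;\F_2)$ consisting of arcs and circles, arranged so that the arcs incident to $I_1$ and $I_2$ are separated out: let $\alpha_1^{(1)},\dots$ be the arcs with exactly one endpoint on $I_1$ (and none on $I_2$), let $\alpha_1^{(2)},\dots$ be those with exactly one endpoint on $I_2$ (and none on $I_1$), let $\beta$ range over arcs with one endpoint on $I_1$ and one on $I_2$, and let $\gamma$ range over the remaining arcs and circles (disjoint from $I_1\cup I_2$). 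After the gluing that produces $\overline{F}$, the classes $\alpha^{(1)}_i$ and $\alpha^{(2)}_j$ become arcs or circles of $\overline{F}$, the classes $\beta$ become circles, and there is exactly one new relation or new class to track coming from the gluing interval. I would make this bookkeeping precise via the long exact sequences of the pairs $(F,S_+)$ and $(\overline{F},\overline{S_+})$, or more simply by collapsing $S_+$ (respectively $\overline{S_+}$) and using the wedge-of-circles models from the proof of the preceding proposition; the upshot is an explicit $\F_2$-basis for $H_1(\overline{F},\overline{S_+};\F_2)$ in terms of the $F$-data.

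Next I would compute the right-hand side directly. The operators $E\otimes 1$ and $1\otimes E$ on $\wedge^* H_1(F,S_+;\F_2)$ are $\Phi_{I_1}$ and $\Phi_{I_2}$ by Theorem~\ref{thm:MainDecatResult} (or by the description in the introduction): $\Phi_{I_a}$ deletes, with a sign-free sum over choices, a wedge factor whose boundary hits $I_a$ oddly. Tensoring over $(\F_2[E]/(E^2))^{\otimes 2}$ with $\F_2[E]/(E^2)$ (where the two copies of $E$ both act as multiplication by the single $E$) amounts to taking the quotient of $\wedge^* H_1(F,S_+;\F_2)\otimes \F_2[E]/(E^2)$ by the relations $\Phi_{I_1}(\omega)\otimes 1 = \omega\otimes E = \Phi_{I_2}(\omega)\otimes 1$. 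I would evaluate this quotient on the chosen basis: the relation $\Phi_{I_1}(\omega)\otimes 1=\Phi_{I_2}(\omega)\otimes 1$ forces $(\Phi_{I_1}+\Phi_{I_2})(\omega)\otimes 1 = 0$, i.e. it kills the image of $\Phi_{I_1}+\Phi_{I_2}$ inside the degree-$1$-in-$E$-is-$0$ part, while $\omega\otimes E$ is identified with $\Phi_{I_1}(\omega)\otimes 1$. Counting dimensions in each exterior degree then matches the basis count for $\wedge^* H_1(\overline{F},\overline{S_+};\F_2)$ obtained in the previous paragraph, and the natural map sending a basis arc/circle of $\overline{F}$ to the corresponding wedge factor of $F$ (tensored with $1$, or with $E$ when the $\overline{F}$-class is a "glued" circle coming from a $\beta$-arc) gives a well-defined isomorphism. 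Finally, since the remaining operators $\Phi_{I}$ for $I\neq I_1,I_2$ only touch wedge factors disjoint from $I_1\cup I_2$, they commute with all of the above identifications, so the isomorphism can be chosen to intertwine them.

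The main obstacle I anticipate is the correct handling of the $\beta$-arcs — arcs with one endpoint on $I_1$ and one on $I_2$ — together with the single "extra" homology class introduced by the gluing interval itself. After gluing, such a $\beta$-arc closes up into a circle of $\overline{F}$, and simultaneously the tensor relation $\omega\otimes E = \Phi_{I_1}(\omega)\otimes 1$ is precisely what accounts for this new closed class; getting the dictionary exactly right (including the degenerate case where an arc has both endpoints on $I_1$, or both on $I_2$, which as noted in the introduction represents the same class as a circle and is "removed twice," hence not at all) is the delicate part. I would dispatch it by working out a small standard model — a sphere with handles and boundary disks as in Figure~\ref{fig:StandardSuturedSurface}, with $I_1$ and $I_2$ two chosen $S_+$ intervals — verifying the isomorphism there explicitly, and then invoking the fact that every sutured surface is obtained from such a model (and that all the constructions are compatible with disjoint union and with the remaining $S_+$ intervals) to conclude in general.
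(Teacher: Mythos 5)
Your overall strategy — choose a geometric basis of arcs and circles as in Figure~\ref{fig:StandardSuturedSurface} and match bases on both sides — is the same one the paper uses, but you are missing the one structural observation that actually makes the tensor-product computation tractable, and without it the bookkeeping you sketch does not close up.

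The paper arranges the basis much more carefully than your $\alpha^{(1)}_i,\alpha^{(2)}_j,\beta,\gamma$ split: it takes a connected acyclic graph $\Gamma_F$ with \emph{exactly one} vertex on each $S_+$ component, with the vertices $p_1\in I_1$, $p_2\in I_2$ chosen to be leaves. With that choice there is a \emph{unique} edge $e_1$ meeting $I_1$ and a \emph{unique} edge $e_2$ meeting $I_2$ (possibly $e_1=e_2$), and no basis arc has both endpoints on the same $I_a$, so the ``removed twice'' degeneracy you worry about simply does not arise. The payoff is that, except in the one degenerate sub-case where $e_1=e_2$, the $(\F_2[E]/(E^2))^{\otimes 2}$-module $\wedge^* H_1(F,S_+;\F_2)$ is \emph{free} with basis $\{e_1\wedge e_2\wedge\omega'\}$. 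Freeness is what makes the right-hand side
$\bigl(\wedge^* H_1(F,S_+;\F_2)\bigr)\otimes_{(\F_2[E]/(E^2))^{\otimes 2}}\F_2[E]/(E^2)$
immediate to compute and to match, basis element by basis element, against $\wedge^* H_1(\overline F,\overline{S_+};\F_2)$. Your plan instead allows arbitrarily many arcs meeting $I_1$ or $I_2$ and then tries to compute the tensor product as a quotient by the relations $\Phi_{I_1}(\omega)\otimes 1=\omega\otimes E=\Phi_{I_2}(\omega)\otimes 1$; with more than one arc hitting $I_a$ the operators $\Phi_{I_1},\Phi_{I_2}$ are sums of several ``remove a factor'' terms, the module is not obviously free, and the promised dimension count is not established. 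Your appeal to ``disjoint union plus standard models'' also cannot handle the case where $I_1$ and $I_2$ lie on the same component of $F$.

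There is a second gap in the passage from $F$ to $\overline F$. You propose to send ``a basis arc/circle of $\overline F$ to the corresponding wedge factor of $F$,'' but an arc of $F$ with one endpoint on $I_1$ does not by itself give a relative cycle of $(\overline F,\overline{S_+})$: after gluing, that endpoint is interior. The paper sidesteps this because its unique $e_1,e_2$ are arranged to concatenate (or close up) into a single arc or circle of $\overline F$, and the correct target class is $e_1\cup e_2$, or a new boundary circle $\sigma$, or a new handle class $\tau$, depending on whether $I_1,I_2$ are on different components of $F$, the same boundary circle, or different boundary circles of the same component. That case analysis is essential — the new class created by gluing has a different geometric nature in each case, and so does the ``$\omega\otimes E$'' side of your dictionary — and your sketch does not make the distinctions. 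To repair the proposal, introduce the acyclic-graph basis with $p_1,p_2$ as leaves, prove freeness over $(\F_2[E]/(E^2))^{\otimes 2}$ (and handle the single non-free sub-case $e_1=e_2$ separately), and then carry out the three topological cases for how $e_1,e_2$ become a class of $\overline F$.
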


\begin{proof}
Pick a homeomorphism between $F$ and a finite disjoint union of standard sutured surfaces as shown in Figure~\ref{fig:StandardSuturedSurface} (spheres with some number of open disks removed and some even number of sutures on each boundary component, connect-summed with some number of tori). Figure~\ref{fig:StandardSuturedSurface} also indicates, with blue arcs and circles, a way to choose bases for $H_1(F,S_+;\F_2)$. One chooses:
\begin{itemize}
    \item For each torus that was connect-summed on, two circles giving a basis for the first homology of the torus;
    \item For all but one of the boundary components intersecting $S_-$ nontrivially, a circle around the boundary component;
    \item A continuous map from a connected acyclic graph $\Gamma_F$ to the surface $F$ (an embedding on each edge of $\Gamma_F$) with one vertex on each component of $S_+$. We will identify $\Gamma_F$ with its image in $F$.
\end{itemize}
These circles, together with the edges of $\Gamma_F$, give a basis for $H_1(F,S_+;\F_2)$, so subsets of this set of arcs and circles give a basis for $\wedge^* H_1(F,S_+;\F_2)$ consisting of wedge products of basis elements of $H_1(F,S_+;\F_2)$.

Now suppose $I_1$ and $I_2$ are intervals of $S_+$; we consider various cases. First, assume $I_1$ and $I_2$ live on distinct connected components of $F$. Choose $\Gamma_F$ such that the vertices on $I_1$ and $I_2$ (say $p_1$ and $p_2$) are leaves of $\Gamma_F$, i.e. they have degree 1. When gluing $F$ to get $\overline{F}$, we can ensure that $p_1$ and $p_2$ are glued to each other. If we let $e_1$ and $e_2$ denote the edges incident with $p_1$ and $p_2$, and modify $\Gamma_F$ by removing $p_1$, $p_2$, $e_1$ and $e_2$ while adding the edge $e_1 \cup e_2$ as an embedded arc in $\overline{F}$, we get an acyclic graph $\Gamma_{\overline{F}}$ embedded in $\overline{F}$ with one vertex on each component of $\overline{S_+}$. See Figure~\ref{fig:DifferentComponents} for an illustration.

Now, for an element $\omega$ of $\wedge^* H_1(F,S_+;\F_2)$ obtained as a wedge product of basis elements of $H_1(F,S_+;\F_2)$, the $I_1$-action of $E \in \F_2[E]/(E^2)$ on $\omega$ is zero if $e_1$ is not a wedge factor of $\omega$. Otherwise, write $\omega = e_1 \wedge \omega'$; we have $E \cdot \omega = \omega'$. 

The $I_2$-action of $E \in \F_2[E]/(E^2)$ on $\wedge^* H_1(F,S_+;\F_2)$ is similar; informally, $E$ acts by ``removing $e_2$.'' It follows that $\wedge^* H_1(F,S_+;\F_2)$ is a free module over $\left( \frac{\F_2[E]}{(E^2)} \right)^{\otimes 2}$ with an $\left( \frac{\F_2[E]}{(E^2)} \right)^{\otimes 2}$-basis given by elements $e_1 \wedge e_2 \wedge \omega'$ for all wedge products $\omega'$ in the other basis elements (not $e_1$ or $e_2$) of $H_1(F,S_+;\F_2)$. Thus, a basis for 
\[
\left( \wedge^* H_1(F,S_+;\F_2) \right) \otimes_{\left( \frac{\F_2[E]}{(E^2)} \right)^{\otimes 2}} \frac{\F_2[E]}{(E^2)}
\]
is given by the set of elements $e_1 \wedge e_2 \wedge \omega'$, together with the elements $e_1 \wedge \omega' = e_2 \wedge \omega'$ (in each case $\omega'$ is a wedge product of basis elements of $H_1(F,S_+;\F_2)$ that are not $e_1$ or $e_2$). Meanwhile, a basis for $\wedge^* H_1(\overline{F},\overline{S_+};\F_2)$ is given by the set of elements $(e_1 \cup e_2) \wedge \omega'$ and $\omega'$ for the same set of $\omega'$. We have a bijection between basis elements given by $e_1 \wedge e_2 \wedge \omega' \leftrightarrow (e_1 \cup e_2) \wedge \omega'$ and $(e_1 \wedge \omega' = e_2 \wedge \omega') \leftrightarrow \omega'$; this bijection is illustrated in Figure~\ref{fig:BasisBijection}. Thus, we have an isomorphism of vector spaces as claimed in the statement of the theorem. 

\begin{figure}
    \centering
    \includegraphics[scale=0.7]{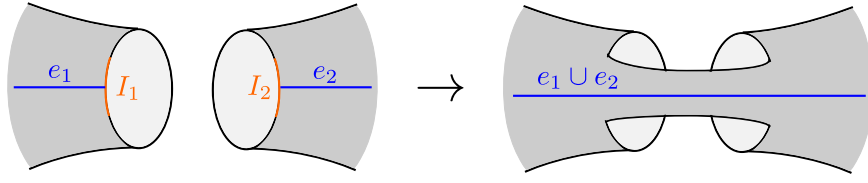}
    \caption{Left: arcs $e_1$ and $e_2$ in the surface $F$ before gluing. Right: the arc $e_1 \cup e_2$ after gluing $I_1$ to $I_2$.}
    \label{fig:DifferentComponents}
\end{figure}

\begin{figure}
    \centering
    \includegraphics{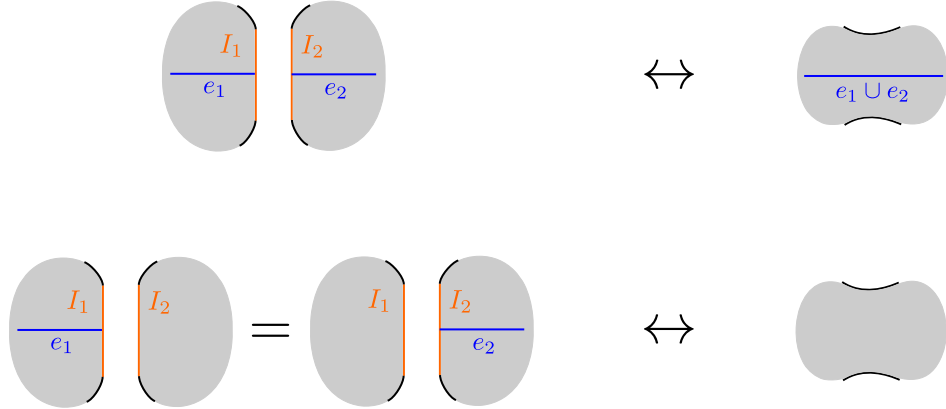}
    \caption{The bijection on basis elements in the first case of Lemma~\ref{lem:MainGluing}.}
    \label{fig:BasisBijection}
\end{figure}

To see that this isomorphism intertwines the remaining actions of $\F_2[E]/(E^2)$ for $S_+$ intervals that are not $I_1$ or $I_2$, it suffices to consider the actions for the other two intervals (say $I'_1$ and $I'_2$) that intersect $e_1$ and $e_2$ respectively. We will consider the action for $I'_1$; the case of $I'_2$ is similar. In the terminology used above, there are four types of basis elements of $\wedge^* H_1(F,S_+;\F_2)$: those of the forms $e_1 \wedge e_2 \wedge \omega'$, $e_1 \wedge \omega'$, $e_2 \wedge \omega'$, and $\omega'$. The $I'_1$-action of $E \in \F_2[E]/(E^2)$ sums over all ways to remove one wedge factor corresponding to an arc with exactly one endpoint on $I'_1$; besides terms that modify $\omega'$, there is a ``remove $e_1$'' term that sends $e_1 \wedge e_2 \wedge \omega'$ to $e_2 \wedge \omega'$ and sends $e_1 \wedge \omega'$ to $\omega'$. When we tensor over $\left( \F_2[E]/(E^2) \right)^{\otimes 2}$ with the identity map on $\F_2[E]/(E^2)$, the ``remove $e_1$'' term of the action of $E$ sends $e_1 \wedge e_2 \wedge \omega'$ to $e_2 \wedge \omega' = e_1 \wedge \omega'$ and sends $e_1 \wedge \omega' = e_2 \wedge \omega'$ to zero. On the other hand, as above there are two types of basis elements of $\wedge^* H_1(\overline{F}, \overline{S_+}; \F_2)$: those of the form $(e_1 \cup e_2) \wedge \omega'$ and those of the form $\omega'$. The $I'_1$-action of $E$ has terms modifying $\omega'$ in the same way as above, and it also has ``remove $e_1 \cup e_2$'' terms sending $(e_1 \cup e_2) \wedge \omega'$ to $\omega'$ and sending $\omega'$ to zero. It follows that our choice of isomorphism intertwines the $I'_1$ action of $\F_2[E]/(E^2)$.

Next, assume $I_1$ and $I_2$ live on the same connected component $F'$ of $F$; without loss of generality we can assume $F$ is connected so that $F' = F$. We consider two further cases: either $I_1$ and $I_2$ live on the same connected component of $\partial F$, or they live on different connected components of $\partial F$. First assume they live on the same component $C$ of $\partial F$, so that gluing $I_1$ to $I_2$ increases the number of boundary components of $F$ by one while keeping the genus the same. When choosing a basis for $H_1(F,S_+;\F_2)$ as above, we can choose $C$ for the unique not-fully-$S_+$ boundary component of $F$ that does not get a circle around it. We can also ensure that in the acyclic graph $\Gamma_F$, the vertices $p_1$ on $I_1$ and $p_2$ on $I_2$ are leaves of $\Gamma_F$.

\begin{figure}
    \centering
    \includegraphics{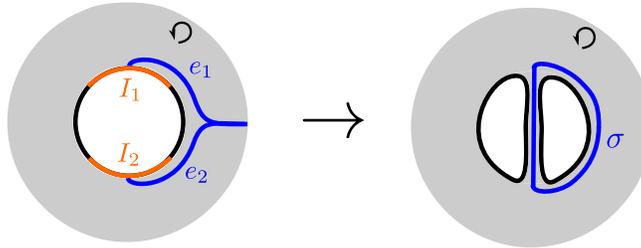}
    \caption{Left: local model near $C$ for the arcs $e_1$ and $e_2$. Right: the circle $\sigma$ after gluing $I_1$ to $I_2$. In both cases the curved arrow indicates the orientation on $F$; the induced boundary orientation on $C$ is clockwise in this figure.}
    \label{fig:NearCircle}
\end{figure}

If there are any intervals of $S_+$ other than $I_1$ and $I_2$, or any fully-$S_+$ circles, then $p_1$ and $p_2$ are incident with distinct edges $e_1 \neq e_2$ of $\Gamma_F$; we can furthermore choose $\Gamma_F$ so that $e_1$ and $e_2$ share an endpoint $q$, and such that as embedded submanifolds of $F$, they look like the left side of Figure~\ref{fig:NearCircle} in a small neighborhood of $C$ and are identical outside this neighborhood (the picture should be appropriately modified if $q$ lives on the circle $C$). As above, $\wedge^* H_1(F,S_+;\F_2)$ is free over $\left( \frac{\F_2[E]}{(E^2)} \right)^{\otimes 2}$ and has four types of basis elements, namely $e_1 \wedge e_2 \wedge \omega'$, $e_1 \wedge \omega'$, $e_2 \wedge \omega'$, and $\omega'$. A basis for 
\[
\left( \wedge^* H_1(F,S_+;\F_2) \right) \otimes_{\left( \frac{\F_2[E]}{(E^2)} \right)^{\otimes 2}} \frac{\F_2[E]}{(E^2)}
\] 
is given by the elements $e_1 \wedge e_2 \wedge \omega'$ along with the elements $e_1 \wedge \omega' = e_2 \wedge \omega'$. Meanwhile, we can take $\Gamma_{\overline{F}}$ to be $\Gamma_F$ with the edges $e_1$ and $e_2$ removed, and when choosing circles around boundary components to assemble a basis for $H_1(\overline{F}, \overline{S_+};\F_2)$, we can put a circle $\sigma$ around the component of $\partial \overline{F}$ containing the segment of $\partial F$ that goes from $I_1$ to $I_2$ when traversing the boundary in the oriented direction (see the right side of Figure~\ref{fig:NearCircle}). Then $\wedge^* H_1(\overline{F}, \overline{S_+}, \F_2)$ has basis elements of type $\sigma \wedge \omega'$ and $\omega'$; we identify these with elements of type $e_1 \wedge e_2 \wedge \omega'$ and $e_1 \wedge \omega' = e_2 \wedge \omega'$ respectively. This bijection on basis elements gives us an isomorphism of vector spaces as in the statement of the theorem.

To see that this isomorphism intertwines the remaining actions of $\F_2[E]/(E^2)$ from $S_+$ intervals other than $I_1$ or $I_2$, it suffices to consider the interval $I$ that contains the common endpoint $q$ of $e_1$ and $e_2$. The $I$-action of $E \in \F_2[E]/(E^2)$ on $\wedge^* H_1(F,S_+;\F_2)$ has terms that modify $\omega'$ as well as ``remove $e_1$'' terms sending (e.g.) $e_1 \wedge e_2 \wedge \omega'$ to $e_2 \wedge \omega'$ and ``remove $e_2$'' terms sending (e.g.) $e_1 \wedge e_2 \wedge \omega'$ to $e_1 \wedge \omega'$. When we tensor over $\left( \F_2[E]/(E^2) \right)^{\otimes 2}$ with the identity map on $\F_2[E]/(E^2)$, both the ``remove $e_1$'' and the ``remove $e_2$'' terms send $e_1 \wedge e_2 \wedge \omega'$ to $e_1 \wedge \omega' = e_2 \wedge \omega'$, and they send $e_1 \wedge \omega' = e_2 \wedge \omega'$ to zero. Since the ``remove $e_1$'' and ``remove $e_2$'' terms act in the same way, their contribution to the overall action of $E$ is zero, and only the ``modify $\omega'$'' terms remain. On the other hand, the $I$-action of $E$ on $\wedge^* H_1(F,S_+;\F_2)$ only modifies $\omega'$ in terms of type $\sigma \wedge \omega'$ or $\omega'$, since $\sigma$ is closed. It follows that our choice of isomorphism intertwines the $I$-action of $\F_2[E]/(E^2)$.

\begin{figure}
    \centering
    \includegraphics{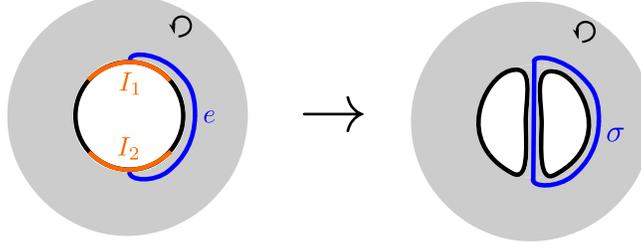}
    \caption{Left: local model near $C$ for the arc $e$. Right: the circle $\sigma$ after gluing $I_1$ to $I_2$.}
    \label{fig:NearCircleSpecial}
\end{figure}

Now assume that $I_1$ and $I_2$ are the only intervals of $S_+$ (but they still live on the same component $C$ of $\partial F$) and that there are no fully-$S_+$ circles; it follows that $\Gamma_F$ has a unique edge $e$ and it connects $p_1$ to $p_2$. We can assume $e$ lives in a small neighborhood of $C$, and that in this neighborhood it looks like the left side of Figure~\ref{fig:NearCircleSpecial}. The $I_1$-action and $I_2$-action of $E \in \F_2[E]/(E^2)$ on $\wedge^* H_1(F,S_+;\F_2)$ agree; they both send $e \wedge \omega'$ to $\omega'$ and send $\omega'$ to zero. Thus
\[
\left( \wedge^* H_1(F,S_+;\F_2) \right) \otimes_{\left( \frac{\F_2[E]}{(E^2)} \right)^{\otimes 2}} \frac{\F_2[E]}{(E^2)}
\] 
is canonically isomorphic to $\wedge^* H_1(F,S_+;\F_2)$ where no tensor operation is performed. Meanwhile, we can take $\Gamma_{\overline{F}}$ to be empty, but in assembling a basis for $H_1(\overline{F}, \overline{S_+};\F_2)$, we again put a circle $\sigma$ around the component of $\partial \overline{F}$ containing the segment of $\partial F$ that goes from $I_1$ to $I_2$ when traversing the boundary in the oriented direction (see the right side of Figure~\ref{fig:NearCircleSpecial}). The correspondences $e \wedge \omega' \leftrightarrow \sigma \wedge \omega'$ and $\omega' \leftrightarrow \omega'$ give an isomorphism of vector spaces as in the statement of the theorem. There are no remaining $S_+$ intervals, so we do not need to check that this isomorphism intertwines any actions.

\begin{figure}
    \centering
    \includegraphics[scale=0.7]{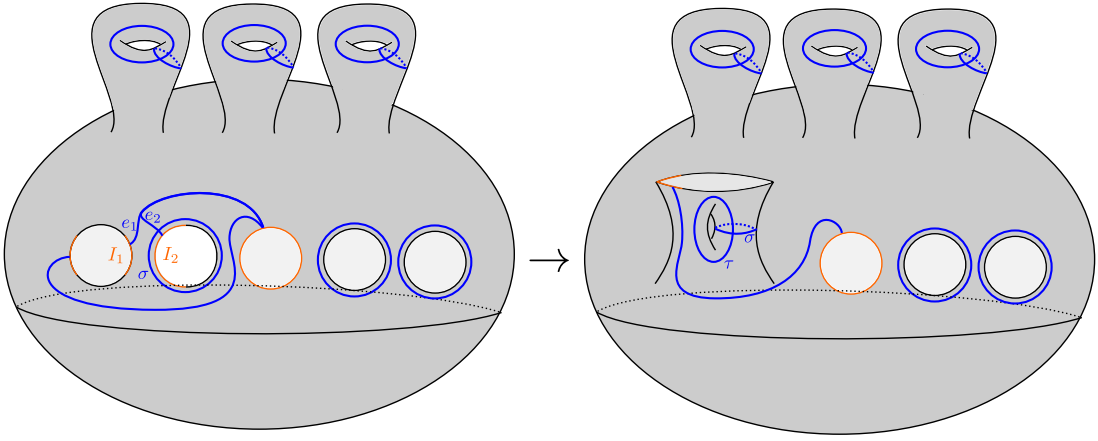}
    \caption{Left: $F$ before gluing intervals $I_1$, $I_2$ on the same component of $F$ but different components of $\partial F$. Right: the glued surface $\overline{F}$.}
    \label{fig:AddingGenus}
\end{figure}

Next, assume that $I_1$ and $I_2$ live on different components $C_1$ and $C_2$ of $\partial F$; for visual simplicity, assume that in the model for $F$ shown in Figure~\ref{fig:StandardSuturedSurface}, $C_1$ and $C_2$ are next to each other. Gluing $I_1$ to $I_2$ decreases the number of boundary components of $F$ by one and increases the genus of $F$ by one. Also assume that there is either at least one $S_+$ interval that is not $I_1$ or $I_2$, or that there is at least one fully-$S_+$ circle. As above, $p_1$ and $p_2$ are incident with distinct edges $e_1 \neq e_2$ of $\Gamma_F$, and we can choose $\Gamma_F$ so that $e_1$ and $e_2$ share a vertex $q$ and only diverge near $C_1$ and $C_2$. We also assume that $C_1$ is the unique not-fully-$S_+$ boundary circle of $F$ that does not get a circle around it as a basis element of $H_1(F,S_+;\F_2)$. Let $\sigma$ be the circle around $C_2$; see the left side of Figure~\ref{fig:AddingGenus}. 

Basis elements for $\wedge^* H_1(F,S_+;\F_2)$ can be of the form $e_1 \wedge e_2 \wedge \omega'$, $e_1 \wedge \omega'$, $e_2 \wedge \omega'$, or $\omega'$; when we tensor with $\F_2[E]/(E^2)$ over $\left( \F_2[E]/(E^2) \right)^{\otimes 2}$, we have a basis whose elements are of type $e_1 \wedge e_2 \wedge \omega'$ or $e_1 \wedge \omega' = e_2 \wedge \omega'$. Meanwhile, we choose a basis for $H_1(\overline{F}, \overline{S_+}, \F_2)$ by choosing a homeomorphism with the standard surface shown on the right side of Figure~\ref{fig:AddingGenus}. The graph $\Gamma_{\overline{F}}$ can be understood as $\Gamma_F$ with $e_1$ and $e_2$ removed; we also have basis elements $\sigma$ and $\tau$ of $H_1(F,S_+;\F_2)$ where $\sigma \subset \overline{F}$ comes from $\sigma \subset F$ and $\tau$ comes from $e_1$ and $e_2$. Basis elements of $\wedge^* H_1(\overline{F},\overline{S_+};\F_2)$ are of the form $\tau \wedge \omega'$ or $\omega'$ where $\omega'$ is a wedge product of basis elements for $H_1(\overline{F},\overline{S_+};\F_2)$ that are not $\tau$. The correspondence $e_1 \wedge e_2 \wedge \omega' \leftrightarrow \tau \wedge \omega'$ and $(e_1 \wedge \omega' = e_2 \wedge \omega') \leftrightarrow \omega'$ gives an isomorphism of vector spaces as in the statement of the theorem. The proof that this isomorphism intertwines the remaining actions of $\F_2[E]/(E^2)$ proceeds as above.

Finally, assume that $I_1$ and $I_2$ are the only $S_+$ intervals and that there are no fully-$S_+$ circles (while $I_1$ and $I_2$ still live on different components of $\partial F$). Letting $e$ be the arc of $\Gamma_F$ connecting $p_1 \in I_1$ to $p_2 \in I_2$, basis elements for $\wedge^* H_1(F,S_+;\F_2)$ are of the form $e \wedge \omega'$ or $\omega'$. Meanwhile, defining $\tau$ as in Figure~\ref{fig:AddingGenus}, basis elements for $\wedge^* H_1(\overline{F}, \overline{S_+};\F_2)$ are of the form $\tau \wedge \omega'$ or $\omega'$. The correspondence $e \wedge \omega' \leftrightarrow \tau \wedge \omega'$ and $\omega' \wedge \omega'$ gives an isomorphism of vector spaces as in the statement of the theorem, and there are no remaining actions for this isomorphism to intertwine.
\end{proof}

Lemma~\ref{lem:MainGluing} implies the following theorem.

\begin{theorem}\label{thm:TQFTGluing}
Let $(F,S_+,S_-,\Lambda)$ and $(F',S'_+, S'_-,\Lambda')$ be two sutured surfaces. For some $m \geq 0$, choose distinct intervals $I_1, \ldots, I_m$ of $S_+$ and distinct intervals $I'_1, \ldots, I'_m$ of $S'_+$. Use $I_1, \ldots, I_m$ to define an action of $\left( \F_2[E]/(E^2) \right)^{\otimes m}$ on $\wedge^* H_1(F,S_+;\F_2)$, and similarly for $F'$. Let $(\overline{F}, \overline{S_+}, \overline{S_-},\overline{\Lambda})$ be the sutured surface obtained by gluing $I_j$ to $I'_j$ for $1 \leq j \leq m$ (in such a way that the result is oriented). Then we have an isomorphism
\[
\wedge^* H_1(\overline{F}, \overline{S_+};\F_2) \cong \wedge^* H_1(F, S_+;\F_2) \otimes_{\left( \F_2[E]/(E^2) \right)^{\otimes m}} \wedge^* H_1(F',S'_+;\F_2)
\]
that intertwines the remaining actions of $\F_2[E]/(E^2)$ for intervals of $S_+$ and $S'_+$ that are not included in $\{ I_1,\ldots,I_m\}$ or $\{I'_1,\ldots,I'_m\}$.
\end{theorem}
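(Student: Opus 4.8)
For the record, here is the approach I would take.

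\medskip

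\noindent\textbf{Strategy.} The plan is to deduce Theorem~\ref{thm:TQFTGluing} from $m$-fold application of Lemma~\ref{lem:MainGluing}, together with two elementary pieces of linear algebra. Writing $A = \F_2[E]/(E^2)$, I would first form the (possibly disconnected) sutured surface $F \sqcup F'$, whose $S_+$ boundary is $S_+ \sqcup S'_+$ and which contains $I_1,\dots,I_m,I'_1,\dots,I'_m$ as distinct interval components of its $S_+$ boundary. Gluing $I_j$ to $I'_j$ for all $j$ produces exactly $\overline{F}$, so I would build $\overline{F}$ in stages: set $G_0 = F \sqcup F'$ and let $G_j$ be $G_{j-1}$ with $I_j$ glued to $I'_j$ (in the orientation-compatible way of Lemma~\ref{lem:MainGluing}), so $G_m = \overline{F}$. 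At stage $j$ the intervals $I_{j+1},\dots,I_m,I'_{j+1},\dots,I'_m$ remain distinct $S_+$ intervals of $G_{j-1}$, and Lemma~\ref{lem:MainGluing} gives an isomorphism
\[
\wedge^* H_1(G_j, \ldots ;\F_2) \;\cong\; \wedge^* H_1(G_{j-1}, \ldots ;\F_2) \otimes_{A_{I_j} \otimes A_{I'_j}} A^{(j)}
\]
that intertwines all remaining $A$-actions. Since the actions collapsed at distinct stages live on disjoint tensor factors of the coefficient ring and hence commute, composing these $m$ isomorphisms yields
\[
\wedge^* H_1(\overline{F}, \overline{S_+};\F_2) \;\cong\; \wedge^* H_1(F \sqcup F', S_+ \sqcup S'_+;\F_2) \otimes_{A^{\otimes 2m}} A^{\otimes m},
\]
where the $2m$ factors of $A^{\otimes 2m}$ are indexed by $I_1,\dots,I_m,I'_1,\dots,I'_m$ and the $j$-th copy of $A$ on the right is a module via the multiplication map $A_{I_j}\otimes A_{I'_j}\to A$; the actions for $S_+$ intervals other than the $I_j,I'_j$ are intertwined throughout.

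\medskip

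\noindent\textbf{Converting to the stated form.} Next I would record two facts. First, a Künneth statement: $H_1(F\sqcup F', S_+\sqcup S'_+) = H_1(F,S_+)\oplus H_1(F',S'_+)$ and $\wedge^*(V\oplus W)\cong \wedge^* V\otimes_{\F_2} \wedge^* W$, so $\wedge^* H_1(F\sqcup F', S_+\sqcup S'_+;\F_2)\cong \wedge^* H_1(F,S_+;\F_2)\otimes_{\F_2}\wedge^* H_1(F',S'_+;\F_2)$; moreover this is equivariant for every $I_j$-action (which only involves the component of $F$ meeting $I_j$) and every $I'_j$-action. Second, a tensor-cancellation identity: since $A$ is commutative, a left $A^{\otimes m}$-module $Q$ is also a right $A^{\otimes m}$-module, and for a right $A^{\otimes m}$-module $P$ the multiplication map $A^{\otimes m}\otimes A^{\otimes m}\to A^{\otimes m}$ gives $(P\otimes_{\F_2}Q)\otimes_{A^{\otimes m}\otimes A^{\otimes m}}A^{\otimes m}\cong P\otimes_{A^{\otimes m}}Q$ (the one-factor case being $(P\otimes Q)\otimes_{A\otimes A}A\cong P\otimes_A Q$, iterated). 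Reorganizing $A^{\otimes 2m} = A^{\otimes m}_{(I)}\otimes A^{\otimes m}_{(I')}$ so that the $I$-factors act on $\wedge^* H_1(F,S_+;\F_2)$ and the $I'$-factors on $\wedge^* H_1(F',S'_+;\F_2)$, and applying these two facts to the displayed isomorphism, turns it into
\[
\wedge^* H_1(\overline{F},\overline{S_+};\F_2)\;\cong\;\wedge^* H_1(F,S_+;\F_2)\otimes_{A^{\otimes m}}\wedge^* H_1(F',S'_+;\F_2),
\]
with the $j$-th factor of $A^{\otimes m}$ acting on the left via $I_j$ and on the right via $I'_j$, which is the assertion of the theorem.

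\medskip

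\noindent\textbf{Intertwining and the main difficulty.} Finally I would check that the composite isomorphism intertwines the leftover $\F_2[E]/(E^2)$-actions: each isomorphism supplied by Lemma~\ref{lem:MainGluing} does so by hypothesis, and the Künneth and tensor-cancellation isomorphisms are $A$-linear for these actions by inspection (those actions involve intervals disjoint from all $I_j,I'_j$, hence act on ``other'' summands of the homology). I expect the main obstacle to be purely organizational rather than conceptual: one must verify carefully that the composite of the $m$ isomorphisms from Lemma~\ref{lem:MainGluing} genuinely has the form $\otimes_{A^{\otimes 2m}}A^{\otimes m}$ with $A^{\otimes m}$ the ``pairwise-diagonal'' bimodule, i.e.\ that collapsing the $j$-th pair of $A$-actions for different $j$ commute and assemble into a single base change, and that nothing gets twisted. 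A clean way to make this airtight is to run the argument as an induction on $m$: the base case $m=1$ is Lemma~\ref{lem:MainGluing} applied to $F\sqcup F'$ plus the two elementary facts, and in the inductive step one peels off the $m$-th pair by gluing $I_m$ to $I'_m$ inside the already partially glued sutured surface and invokes the inductive hypothesis on what remains. This induction is the form I would ultimately write up.
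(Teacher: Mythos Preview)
Your proposal is correct and is essentially the paper's own argument, just spelled out in more detail: the paper reduces to the disjoint union $F\sqcup F'$ and applies Lemma~\ref{lem:MainGluing} $m$ times, implicitly using the K\"unneth identification and the tensor-cancellation identity you state. Your suggested induction on $m$ is a fine way to package what the paper's one-line rewriting leaves to the reader.
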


\begin{proof}
We can write
\[
\wedge^* H_1(F, S_+;\F_2) \otimes_{\left( \F_2[E]/(E^2) \right)^{\otimes m}} \wedge^* H_1(F',S'_+;\F_2)
\]
as
\[
\left(\left( \wedge^* H_1(F \sqcup F', S_+ \sqcup S'_+; \F_2) \right) \otimes_{\left(\F_2[E]/(E^2) \right)^{\otimes 2}} \F_2[E]/(E^2) \right) \ldots \otimes_{\left(\F_2[E]/(E^2) \right)^{\otimes 2}} \F_2[E]/(E^2).
\]
where there are $m$ successive tensor products by $\F_2[E]/(E^2)$ over $\left(\F_2[E]/(E^2) \right)^{\otimes 2}$ (one for each pair $(I_j, I'_j)$). The result now follows from Lemma~\ref{lem:MainGluing}.
\end{proof}

\begin{corollary}
There is a functor from the full subcategory of the $1+1$-dimensional oriented open-closed cobordism category on objects with no closed circles (the ``open sector'' of the open-closed cobordism category) to ($\F_2$-algebras, bimodules up to isomorphism) sending an object with $m$ intervals to $\left( \F_2[E]/(E^2) \right)^{\otimes m}$ and sending a morphism (viewed as a sutured surface $(F,S_+,S_-,\Lambda)$) to $\wedge^* H_1(F,S_+;\F_2)$ (viewed as a bimodule over tensor products of $\F_2[E]/(E^2)$ for the input and output intervals of the morphism).
\end{corollary}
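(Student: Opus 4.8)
The plan is to check directly that the stated assignments define a functor, the only nontrivial input being Theorem~\ref{thm:TQFTGluing}. First I would fix conventions for the two categories involved. On the target side, consider the category whose objects are $\F_2$-algebras and whose morphisms from $A$ to $B$ are isomorphism classes of $(B,A)$-bimodules, with composition given by tensor product over the middle algebra and with identity morphism at $A$ the class of the regular bimodule ${}_A A_A$; this is a genuine category since associativity and unitality of the bimodule tensor product hold up to isomorphism. On the source side I would use a standard model of the open-closed cobordism category (as in \cite{LaudaPfeiffer}): an object of the open sector is a finite disjoint union of oriented intervals, and a morphism from $X$ to $Y$ is a diffeomorphism class, rel boundary, of a compact oriented surface $F$ whose boundary is split into a black part identified with $X \sqcup Y$ and a colored part, with black and colored regions alternating along each boundary circle. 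Such a morphism is exactly a sutured surface $(F,S_+,S_-,\Lambda)$ — with $S_+$ the black boundary, $S_-$ the colored boundary, and $\Lambda$ the separating points — together with a labeling of each interval of $S_+$ (in the open sector $S_+$ has no circle components) as incoming or outgoing. If $X$ has $p$ intervals and $Y$ has $q$ intervals, I would send $F$ to $\wedge^* H_1(F,S_+;\F_2)$, viewed as a bimodule over $(\F_2[E]/(E^2))^{\otimes q}$ via the interval actions $\Phi_I$ of the introduction for the outgoing ($Y$) intervals and over $(\F_2[E]/(E^2))^{\otimes p}$ via the incoming ($X$) intervals. Since $\wedge^* H_1(F,S_+;\F_2)$ and the $\Phi_I$ depend only on the homeomorphism type of the pair $(F,S_+)$ and the labeling, this is well defined on morphism classes, and it lands on the morphism set from the image of $X$ to the image of $Y$.

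Next I would check that identities are preserved. The identity morphism on an object with $m$ intervals is a disjoint union of $m$ squares, each having one pair of opposite edges black (the incoming and outgoing interval) and the other pair colored; as a sutured surface this is a disjoint union of $m$ disks, each with $S_+$ a pair of disjoint arcs. For one such disk $D$ we have $H_1(D;\F_2)=0$, while the long exact sequence of the pair gives $H_1(D,S_+;\F_2)\cong\F_2$, generated by a properly embedded arc $\gamma$ joining the two components of $S_+$. As $\gamma$ has exactly one endpoint on each of those components, both interval actions of $E$ send $\gamma\mapsto 1$ and $1\mapsto 0$. Identifying $\wedge^* H_1(D,S_+;\F_2)=\F_2\cdot 1\oplus\F_2\cdot\gamma$ with $\F_2[E]/(E^2)$ via $\gamma\leftrightarrow E$, the left and right actions both become multiplication, so the bimodule is ${}_R R_R$ for $R=\F_2[E]/(E^2)$. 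Using $\wedge^* H_1(F_a\sqcup F_b,\,S_+^a\sqcup S_+^b;\F_2)\cong\wedge^* H_1(F_a,S_+^a;\F_2)\otimes_{\F_2}\wedge^* H_1(F_b,S_+^b;\F_2)$ (compatibly with interval actions), the $m$-fold disjoint union goes to $R^{\otimes m}$ as a bimodule over itself, i.e. to the identity morphism of the target.

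Compatibility with composition is Theorem~\ref{thm:TQFTGluing}. If $F_1\colon X\to Y$ and $F_2\colon Y\to Z$ are composable and $Y$ has $m$ intervals, then $F_2\circ F_1$ is obtained by gluing the $m$ outgoing intervals of $F_1$ to the $m$ incoming intervals of $F_2$, orientation-preservingly as prescribed by the cobordism data. Applying Theorem~\ref{thm:TQFTGluing} with $F=F_1$, $F'=F_2$, and $\{I_j\}$, $\{I'_j\}$ these two families of glued intervals gives
\[
\wedge^* H_1(F_2\circ F_1,\,\overline{S_+};\F_2)\;\cong\;\wedge^* H_1(F_1,S_+;\F_2)\otimes_{(\F_2[E]/(E^2))^{\otimes m}}\wedge^* H_1(F_2,S_+;\F_2),
\]
and the ``intertwines the remaining interval actions'' clause of Theorem~\ref{thm:TQFTGluing} says precisely that this isomorphism respects the bimodule structures over the algebras attached to $X$ and to $Z$. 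Up to the canonical isomorphism $M\otimes_{R^{\otimes m}}N\cong N\otimes_{R^{\otimes m}}M$ (valid since $R^{\otimes m}$ is commutative), this is exactly the assertion $[F_2\circ F_1]=[F_2]\circ[F_1]$ in the target category, so the assignment is a functor.

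I expect the only real obstacle to be the bookkeeping in this last step: pinning down conventions (which algebra sits on the ``source'' side of the bimodule attached to $F\colon X\to Y$, how the orientations of the intervals being glued are arranged so that Theorem~\ref{thm:TQFTGluing}'s orientation hypothesis is met, and degenerate cases such as the empty object, sent to $\F_2$, or a cobordism with empty $S_+$, sent to $\wedge^* H_1(F;\F_2)$ as an $\F_2$-$\F_2$-bimodule) tightly enough that the matching with Theorem~\ref{thm:TQFTGluing} is unambiguous. Everything else is formal. One can also note — though the corollary does not claim it — that the same argument makes the functor symmetric monoidal, sending disjoint union of cobordisms to $\otimes_{\F_2}$ of bimodules, since the interval actions on $\wedge^* H_1$ of a disjoint union are the external tensor product of the factor actions.
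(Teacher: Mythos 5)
The paper leaves this corollary without an explicit proof, treating it as immediate from Theorem~\ref{thm:TQFTGluing}; you supply the routine verification, and your structure (well-definedness on objects and morphism classes, identities, composition) is the right one. The composition check is exactly the invocation of Theorem~\ref{thm:TQFTGluing} the paper intends, and your remark that commutativity of $(\F_2[E]/(E^2))^{\otimes m}$ resolves the $M\otimes_R N$ versus $N\otimes_R M$ discrepancy is a legitimate way to handle that convention issue, since the left and right actions on $\wedge^* H_1(F,S_+;\F_2)$ are both defined identically through the commutative algebra $\F_2[E]/(E^2)$.

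One detail in your identity check is backwards and, as written, false. You identify $\wedge^* H_1(D,S_+;\F_2)=\F_2\cdot 1\oplus\F_2\cdot\gamma$ with $R=\F_2[E]/(E^2)$ via $\gamma\leftrightarrow E$ (hence $1\leftrightarrow 1$), but this does not intertwine the actions: multiplication by $E$ on $R$ sends $1\mapsto E$, whereas $\Phi_I(1)=0$ because $\Phi_I$ lowers wedge degree and $\wedge^{-1}=0$. The operator $\Phi_I$ annihilates the bottom of the exterior algebra, while multiplication by $E$ annihilates the top of $R$, so the module isomorphism must reverse those roles: take $\gamma\leftrightarrow 1$ and $1\leftrightarrow E$. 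One can check directly that $\psi(\gamma)$ must be a unit of $R$, forcing $\psi(\gamma)\in\{1,1+E\}$ and $\psi(1)=E\psi(\gamma)=E$; your choice $\psi(\gamma)=E$ would give $\psi(1)=E^2=0$, which is not injective. With the corrected identification your conclusion — that the disk cobordism gives the regular bimodule ${}_R R_R$, and hence the identity of the target category — is correct, and the rest of the argument stands.
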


\section{The tensor product case}\label{sec:Tensor}

Figure~\ref{fig:TensorProduct} shows the open pair of pants surface $P$ with a sutured structure $(P,S_+,S_-,\Lambda)$. Let $e_1$ and $e_2$ be the arcs shown in the figure and let $I_1$, $I_2$, and $I_3$ be the $S_+$ intervals shown in the figure. Since $\{e_1,e_2\}$ is a basis for $H_1(P,S_+;\F_2)$, we have a basis $\{1, e_1, e_2, e_1 \wedge e_2\}$ for $\wedge^* H_1(P,S_+;\F_2)$. The three actions of $\F_2[E]/(E^2)$ on $\wedge^* H_1(P,S_+;\F_2)$ can be described as follows:
\begin{itemize}
    \item For the $I_1$-action, $E$ sends $1 \mapsto 0$, $e_1 \mapsto 1$, $e_2 \mapsto 0$, and $e_1 \wedge e_2 \mapsto e_2$.
    \item For the $I_2$-action, $E$ sends $1 \mapsto 0$, $e_1 \mapsto 0$, $e_2 \mapsto 1$, and $e_1 \wedge e_2 \mapsto e_1$.
    \item For the $I_3$-action, $E$ sends $1 \mapsto 0$, $e_1 \mapsto 1$, $e_2 \mapsto 1$, and $e_1 \wedge e_2 \mapsto e_1 + e_2$.
\end{itemize}
Using the $I_1$ and $I_2$ actions to define an action of $\left( \F_2[E]/(E^2) \right)^{\otimes 2}$ on $\wedge^* H_1(P,S_+;\F_2)$, we see that $\wedge^* H_1(P,S_+;\F_2)$ is a free module of rank $1$ over $\left( \F_2[E]/(E^2) \right)^{\otimes 2}$ with an $\left( \F_2[E]/(E^2) \right)^{\otimes 2}$-basis given by $\{e_1 \wedge e_2\}$. The $I_3$-action of $\F_2[E]/(E^2)$ is then given by applying the coproduct $\Delta(E) = E \otimes 1 + 1 \otimes E$, followed by multiplication in $\left( \F_2[E]/(E^2) \right)^{\otimes 2}$.

Now, if we have sutured surfaces $(F',S'_+,S'_-,\Lambda')$ and $(F'',S''_+,S''_-,\Lambda'')$ with chosen intervals $I'$ and $I''$ in $S'_+$ and $S''_+$ respectively, we can glue $F' \sqcup F''$ to $P$ by gluing $I'$ to $I_1$ and $I''$ to $I_2$. Applying Theorem~\ref{thm:TQFTGluing} with $F_1 := F' \sqcup F''$ and $F_2 := P$, and letting $(\overline{F}, \overline{S_+}, \overline{S_-}, \overline{\Lambda})$ denote the glued surface, we have
\begin{align*}
\wedge^* H_1(\overline{F}, \overline{S_+};\F_2) &\cong \left( \F_2[E]/(E^2) \right)^{\otimes 2} \otimes_{\left( \F_2[E]/(E^2) \right)^{\otimes 2}} \wedge^* H_1(F' \sqcup F'', S'_+ \sqcup S''_+; \F_2) \\
&\cong \wedge^* H_1(F' \sqcup F'', S'_+ \sqcup S''_+; \F_2) \\
&\cong \wedge^* H_1(F', S'_+;\F_2) \otimes \wedge^* H_1(F'',S''_+;\F_2)
\end{align*}
with $I_3$-action of $E$ given by taking $\Delta(E) = E \otimes 1 + 1 \otimes E$ and then acting on the tensor product $\wedge^* H_1(F', S'_+;\F_2) \otimes \wedge^* H_1(F'',S''_+;\F_2)$. Corollary~\ref{cor:IntroTensor} follows from this computation.

\begin{figure}
    \centering
    \includegraphics[scale=0.7]{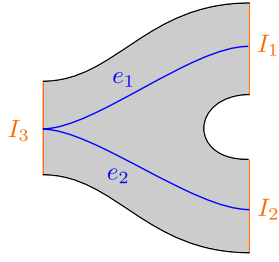}
    \caption{The open pair-of-pants surface $P$ with sutured structure and basis $\{e_1,e_2\}$ for $H_1(P,S_+;\F_2)$.}
    \label{fig:TensorProduct}
\end{figure}

\bibliographystyle{alpha}
\bibliography{biblio.bib}

\end{document}